\newtheorem{theorem}{Theorem}
\newtheorem{lem}[theorem]{Lemma}
\newtheorem{cor}[theorem]{Corollary}
\newtheorem{prop}[theorem]{Proposition}
\theoremstyle{definition}
\newtheorem{Def}[theorem]{Definition}
\newtheorem{fact}[theorem]{Fact}
\title{\Large \bf Rainbow triangles sharing one common vertex or edge\footnote{BN was partially supported by
NSFC (No. 11971346).}}
\author{{\small Xiaozheng Chen$^a$ and Bo Ning$^b$}\footnote{Corresponding author.}\\
{\small $^a$School of Mathematics and Statistics}\\
{\small Zhengzhou University, Zhengzhou 450000, China}\\
{\small Email: cxz@zzu.edu.cn}\\
{\small $^b$College of Computer Science}\\
{\small Nankai University, Tianjin 300350, China}\\
{\small Email: bo.ning@nankai.edu.cn}}
\date{}
\begin{document}
\maketitle

\begin{abstract}
Let $G$ be an edge-colored graph on $n$ vertices. For a vertex
$v$, the \emph{color degree} of $v$ in $G$, denoted by $d^c(v)$,
is the number of colors appearing on the edges incident with $v$.
Denote by $\delta^c(G)=\min\{d^c(v):v\in V(G)\}$.
By a theorem of H. Li, an $n$-vertex edge-colored graph $G$ contains a rainbow
triangle if $\delta^c(G)\geq \frac{n+1}{2}$. Inspired by this
result, we consider two related questions concerning
edge-colored books and friendship subgraphs of edge-colored graphs.
Let $k\geq 2$ be a positive integer.
We prove that if $\delta^c(G)\geq \frac{n+k-1}{2}$ where $n\geq 3k-2$,
then $G$ contains $k$ rainbow triangles sharing one common edge;
and if $\delta^c(G)\geq \frac{n+2k-3}{2}$ where $n\geq 2k+9$,
then $G$ contains $k$ rainbow triangles sharing one common vertex.
The special case $k=2$ of both results improves H. Li's theorem.
The main novelty of our proof of the first result is a combination of the recent
new technique for finding rainbow cycles due to Czygrinow, Molla, Nagle, and Oursler
and some recent counting technique from \cite{LNSZ}. The proof of the second result
is with the aid of the machine implicitly in the work of Tur\'an numbers
for matching numbers due to Erd\H{o}s and Gallai.
\\[2mm]
{\bf Keywords:} edge-colored graph; color degree;
books; friendship graphs\\[2mm]
{\bf AMS Classification 2020:} 05C15, 05C38.

\end{abstract}

\section{Introduction}
In 1907, Mantel \cite{M07} proved that every triangle-free graph on $n$ vertices has size at
most $\lfloor\frac{n^2}{4}\rfloor$. Rademacher (see \cite[pp.91]{EFGG})
showed that there are indeed at
least $\lfloor\frac{n}{2}\rfloor$ triangles in a graph $G$ on $n$ vertices
and at least $\frac{n^2}{4}+1$ edges but not only one triangle.
The $k$-fan (usually called \emph{friendship graph}), denoted by $F_k$, is a graph
which consists of $k$ triangles sharing a common vertex.
The \emph{book} $B_k$ is a graph which consists of $k$ triangles sharing a common edge.
Erd\H{o}s \cite{Er} extended Mantel's theorem and conjectured that
there is a $B_{\lceil\frac{n}{6}\rceil}$ in $G$ if $e(G)>\frac{n^2}{4}$, which was
later confirmed by Edwards in an unpublished
manuscript \cite{Ed} and independently by Khad\v{z}iivanov and Nikiforov \cite{KN}.
Erd\H{o}s, F\"{u}redi, Gould, and Gunderson \cite{EFGG} also
studied Tur\'an numbers of $F_k$,
and proved that $ex(n,F_k)=\lfloor \frac{n^2}{4}\rfloor+k^2-k$
if $k$ is odd; and $ex(n,F_k)=\lfloor \frac{n^2}{4}\rfloor+
k^2-\frac{3k}{2}$ if $k$ is even, for $n\geq 50k^2$. These results
immediately imply the fact that every graph on $n$ vertices
with minimum degree at least $\frac{n+1}{2}$ contains a $B_k$
for $n\geq 6k$ and also a $F_k$ for $n\geq 50k^2$.
In this paper, we consider edge-colored versions of these extremal problems.

A subgraph of an edge-colored graph is
\emph{properly colored} (\emph{rainbow})
if every two adjacent edges (all edges) have
pairwise different colors. The rainbow
and properly-colored subgraphs have been shown closely related
to many graph properties and other topics, such
as classical stability results on Tur\'an functions \cite{Y21},
Bermond-Thomassen Conjecture \cite{FLW19},
and Caccetta-H\"{a}ggkvist Conjecture \cite{ADH19},
etc. For more rainbow and properly-colored
subgraphs under Dirac-type color degree conditions,
we refer to \cite{FLZ18,FNXZ19,EM20,CMNO21,DHWY22}.

The study of rainbow triangles has a rich history,
and there are many classical open problems
on them. In some classical problems, the host graph is complete.
One conjecture due to Erd\H{o}s and S\'{o}s \cite{EH}
asserts that the maximum number of rainbow triangles in
a 3-edge-coloring of the complete graph $K_n$, denoted by $F(n)$,
satisfied that
$F(n)=F(a)+F(b)+F(c)+F(d)+abc+abd+acd+bcd$, where $a+b+c+d=n$
and $a,b,c,d$ are as equal as possible. By using Flag Algebra,
Balogh et al. \cite{B} confirmed this conjecture for $n$ sufficiently large and $n=4^{k}$
for any $k\geq 1$. The other example is a recent conjecture
by  Aharoni (see \cite{ADH19}), which can imply Caccetta-H\"{a}ggkvist
Conjecture \cite{CH}, a big and fundamental open problem in digraph.
The conjecture says that given any positive integer $r$,
if $G$ is an $n$-vertex edge-colored graph with $n$ color classes,
each of size at least $n/r$, then $G$ contains a rainbow cycle of length at most $r$.
For more recent developments on Aharoni's conjecture, we refer
to the work \cite{DDF,HPPS,HS} and
more references therein. A special case of Aharoni's
conjecture is about rainbow triangles. The relationship between
directed triangles and rainbow triangles has been extensively
used before (see \cite{LW12,L13,LNXZ14}).

We would like to introduce a
construction from Li \cite{L13}. Suppose that $D$ is an $n$-vertex digraph satisfying
out-degree of every vertex is at least $n/3$. Let $V(D)=\{v_1,v_2,\ldots,v_n\}$.
We construct an edge-colored graph $G$ such that: $V(G)=V(D)$; for each
arc $v_iv_j\in A(D)$, we color the edge $v_iv_j$ with the color $j$.
In this way, the number of colors appearing on edges incident with $v_i$
different from $i$ equals to $d^+_D(v_i)$. Thus, finding a directed triangle
in $D$ is equivalent to finding a rainbow triangle in such an
edge-colored graph. More importantly for us, the idea of constructing
the auxiliary digraph will also be an important constitution for our proofs in
this paper.

Our theme of this paper is closely related
to the color degree conditions for rainbow triangles.
Let $G$ be an edge-colored graph.
For every vertex $v\in V(G)$, the \emph{color degree} of $v$,
denoted by $d^c_G(v)$, is the number of distinct colors appearing
on the edges which are incident to $v$.
The \emph{minimum color degree} of $G$, denoted by $\delta^c(G)$
(or in short, $\delta^c$), equals to $\min\{d^c(v):v\in V(G)\}$.
It is an easy observation
that every graph on $n$ vertices contains a triangle
if minimum degree is at least $\frac{n+1}{2}$.
H. Li and Wang \cite{LW12} considered a rainbow version and conjectured
that the minimum color degree condition $\delta^c(G)\geq \frac{n+1}{2}$
ensures the existence of a rainbow triangle in $G$.
This conjecture was confirmed by H. Li \cite{L13} in 2013.
\begin{theorem}[H. Li \cite{L13}]\label{Thm:Li}
Let $G$ be an edge-colored graph on $n$ vertices. If $\delta^c(G)\geq \frac{n+1}{2}$
then $G$ contains a rainbow triangle.
\end{theorem}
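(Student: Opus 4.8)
The plan is to argue by contradiction: assume $G$ has no rainbow triangle and yet $\delta^c(G)\ge\frac{n+1}{2}$, and derive a contradiction. The starting point is to fix one vertex $u$ together with a \emph{rainbow star} at $u$, that is, a set of edges $uv_1,\dots,uv_d$ receiving pairwise distinct colours $c_1,\dots,c_d$, where $d=d^c(u)\ge\frac{n+1}{2}$; write $N=\{v_1,\dots,v_d\}$ and $W=V(G)\setminus(N\cup\{u\})$, so that $|W|=n-1-d\le\frac{n-3}{2}<|N|$. First I would record the basic local constraint forced by the absence of a rainbow triangle: for every edge $v_iv_j$ inside $N$ the triangle $uv_iv_j$ must fail to be rainbow, and since $c_i\ne c_j$ this means $c(v_iv_j)\in\{c_i,c_j\}$.

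This constraint is what lets me pass to an auxiliary digraph in the spirit of the construction of Li recalled above. Orient each edge of $G[N]$ by the rule $v_i\to v_j$ whenever $c(v_iv_j)=c_j$ (the ``partner's colour''); this is well defined because the two options $c_i,c_j$ are distinct. The crucial observation is that a directed triangle $v_i\to v_j\to v_k\to v_i$ of this orientation is automatically a rainbow triangle of $G$: its three edges carry the colours $c_j,c_k,c_i$, which are pairwise distinct. Hence, under our assumption, the oriented graph $\vec G[N]$ contains no directed triangle, and the problem is reduced to showing that $\vec G[N]$ \emph{must} contain one.

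To drive this, I would lower-bound the out-degrees. Counting the colours seen at a fixed $v_i\in N$, the edges from $v_i$ into $N\cup\{u\}$ contribute only the colour $c_i$ together with the colours $c_j$ for which $v_i\to v_j$, hence at most $1+d^{+}(v_i)$ distinct colours, while the edges from $v_i$ to $W$ contribute at most $|W|=n-1-d$ further colours. With $d^c(v_i)\ge\frac{n+1}{2}$ this yields $d^{+}(v_i)\ge d^c(v_i)-(n-d)\ge d-\frac{n-1}{2}\ge 1$, so every vertex of $\vec G[N]$ has positive out-degree, and the bound improves as $d$ grows.

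The hard part will be converting this out-degree bound into an actual directed triangle. A positive, or even moderately large, minimum out-degree alone does not force a directed triangle: this is exactly the delicate triangle case of the Caccetta--H\"{a}ggkvist phenomenon, and the extremal triangle-free orientations (blow-ups of directed short cycles) have minimum out-degree a constant fraction of $|N|$, a regime the crude count above does not reach. The way I would close the gap is to stop discarding the set $W$: since $|W|<|N|$ there is a large surplus to exploit, and the colours appearing on edges from $N$ to $W$ are themselves constrained (for a neighbour $w$ of $u$ with $c(uw)\ne c_i$, the triangle $uv_iw$ forces $c(v_iw)\in\{c_i,c(uw)\}$, and $c(uw)$ is again one of the star colours). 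Feeding these constraints back---by averaging the estimate over all admissible choices of the centre $u$, and by passing to the second rainbow neighbourhood of a vertex of $N$ to boost the effective out-degree above the Caccetta--H\"{a}ggkvist-type threshold on $|N|$ vertices---should either produce the required directed, hence rainbow, triangle or contradict $\delta^c(G)\ge\frac{n+1}{2}$. Making this final boosting step quantitatively tight, so that the bound $\frac{n+1}{2}$ is not lost, is where I expect the main technical difficulty to lie.
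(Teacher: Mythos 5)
Your set-up is sound as far as it goes: the observation that every edge $v_iv_j$ inside the rainbow star must be coloured $c_i$ or $c_j$, the orientation $v_i\to v_j$ when $c(v_iv_j)=c_j$, and the fact that a directed triangle in this orientation is a rainbow triangle are all correct (this is essentially the inverse of Li's digraph-to-coloured-graph construction recalled in the introduction of the paper). The problem is that the reduction, as you state it, throws away almost all of the hypothesis and leaves you with a statement that is not only unproved but essentially the open Caccetta--H\"{a}ggkvist conjecture. Your out-degree count gives $d^{+}(v_i)\geq d^c(v_i)+d-n$, which at the threshold $d^c\equiv\frac{n+1}{2}$ is just $d^{+}(v_i)\geq 1$ in a digraph on $d\approx\frac{n}{2}$ vertices. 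Minimum out-degree $1$ (or any constant, or anything of order $o(|N|)$) does not force a directed triangle: a long directed cycle, or a blow-up of one, has no directed triangle at all. Moreover, no hidden structure of the auxiliary digraph rescues you, because \emph{every} oriented graph $D$ on $\{v_1,\dots,v_d\}$ arises from this construction (colour the arc $v_i\to v_j$ with $c_j$ and join $u$ to each $v_i$ with colour $c_i$), so what you would need is a genuine out-degree threshold. The natural target, minimum out-degree $\geq |N|/3$, is precisely the triangle case of the Caccetta--H\"{a}ggkvist conjecture, which is open; even the best known unconditional results require roughly $0.3542\,|N|$. Your proposed ``boosting'' steps (re-using $W$, averaging over centres, second rainbow neighbourhoods) are not carried out, and they would have to raise the bound from $1$ to order $n/6$, a gap of order $n$; you yourself flag this as the main difficulty, so the proof is incomplete exactly where the theorem lives.

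For comparison, the successful arguments (Li's original proof, and the machinery this paper uses to prove the stronger Theorem \ref{Thm1:Bk}) never pass to a bare digraph problem. The paper works at a vertex $v$ of \emph{maximum monochromatic degree} in an edge-minimal graph and counts rainbow triangles directly: in Lemma \ref{lem:rt(v,N_i(v))} the colours that fail to produce a rainbow triangle at $x\in N_i(v)$ are charged either to restriction colours or to the in-arcs of an auxiliary digraph, i.e.\ to the term $\sum_{y\in N_!(v)}d_{c(vy)}(y,N_i(v))$, and Lemma \ref{Lem:B(v)} shows that precisely because $v$ maximizes the monochromatic degree this charge is absorbed, giving $B(v)\geq 0$ and hence, via Corollary \ref{Cor:rt(v)},
$$rt(v)\;\geq\;\frac{1}{2}\sum_{x\in N(v)}\bigl(d^c(x)+d^c(v)-n\bigr)\;\geq\;\frac{1}{2}\,d(v)\;>\;0$$
whenever $\delta^c(G)\geq\frac{n+1}{2}$. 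The two ingredients your sketch is missing are exactly these: the special choice of the base vertex (maximum monochromatic degree, under edge-minimality), and a counting identity that converts the ``bad'' in-arcs into a quantity you can bound, rather than an appeal to a Caccetta--H\"{a}ggkvist-type threshold that nobody can prove.
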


Independently, B. Li, Ning, Xu and Zhang \cite{LNXZ14}
proved a weaker condition $\sum_{v\in V(G)}d^c(v)\geq \frac{n(n+1)}{2}$
suffices for the existence of rainbow triangles, and moreover, characterized
the exceptional graphs under the condition $\delta^c(G)\geq \frac{n}{2}$.
Very recently, X. Li, Ning, Shi and Zhang \cite{LNSZ} proved
a counting version of Theorem \ref{Thm:Li}, i.e.,
there are at least $\frac{1}{6}\delta^c(G)(2\delta^c(G)-n)n$
rainbow triangles in an edge-colored graph $G$, which is best possible.

Hu, Li and Yang \cite{HLY20} proposed the following conjecture:
Let $G$ be an edge-colored graph on $n\geq 3k$ vertices. If
$\delta^c\geq \frac{n+k}{2}$ then $G$ contains $k$ vertex-disjoint
rainbow triangles.
Besides the work on Tur\'an numbers of books and $k$-fans mentioned before,
our another motivation is to study the converse of Hu-Li-Yang's conjecture,
i.e., rainbow triangles sharing vertices or edges. As the selections for us,
we shall study the existence of rainbow triangles sharing one common vertex
or an edge under color degree conditions, in views of the famous
books and friendship graphs in graph theory.

Our original result is the following one which improves Li's theorem. Indeed, we can go farther.
\begin{theorem}\label{Thm:Original}
Let $G$ be an edge-colored graph on $n$ vertices with $\delta^c(G)\geq \frac{n+1}{2}$.\\
(i) If $n\geq 5$ then $G$ contains two rainbow triangles sharing one common edge;\\
(ii) If $n\geq 13$ then $G$ contains two rainbow triangles sharing one common vertex.
\end{theorem}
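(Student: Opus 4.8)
The plan is to reformulate both statements through the \emph{link graph} of a vertex and then to play the counting bound of X. Li, Ning, Shi and Zhang \cite{LNSZ} against the extremal characterization of Erd\H{o}s and Gallai for matchings. For a vertex $v$, let $L_v$ be the graph on $N(v)$ in which $xy$ is an edge precisely when $vxy$ is a rainbow triangle; thus $e(L_v)$ equals the number of rainbow triangles through $v$, and $\sum_{v}e(L_v)=3R$, where $R$ is the total number of rainbow triangles. The dictionary is exact: two edges of $L_v$ sharing a vertex correspond to two rainbow triangles sharing the edge $vx$ (a copy of $B_2$), while two independent edges of $L_v$ correspond to two rainbow triangles meeting only at $v$ (a copy of $F_2$). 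Since $\delta^c(G)\ge\frac{n+1}{2}$ gives $2\delta^c(G)-n\ge 1$, the bound of \cite{LNSZ} yields $R\ge\frac{1}{6}\delta^c(G)(2\delta^c(G)-n)n\ge\frac{n(n+1)}{12}$, a quadratic supply of rainbow triangles that both parts will exploit against the scarce structure forced by the absence of $B_2$ or $F_2$.

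For part (i) I would argue by contradiction: if $G$ has no $B_2$ then no edge lies in two rainbow triangles, so every $L_v$ is a matching and the rainbow triangles are pairwise edge-disjoint. A global packing bound alone is too weak, since edge-disjointness only caps $R$ at $\frac{1}{3}\binom{n}{2}=\frac{n(n-1)}{6}$, which exceeds $\frac{n(n+1)}{12}$; the argument must therefore be localized. Fixing a vertex $v$ of maximum color degree and a rainbow color-neighborhood $S=N^c(v)$ with $|S|=d^c(v)\ge\frac{n+1}{2}$, each $x\in S$ has at most $\frac{n-3}{2}$ non-neighbours, so $G[S]$ has at least $\frac{|S|}{4}(2|S|-n+1)$ edges. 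The heart of the proof is to convert enough of these edges into rainbow triangles through $v$ and to show that their number strictly exceeds the matching cap $\lfloor d(v)/2\rfloor$ imposed by edge-disjointness; this is exactly where the rainbow-cycle technique of Czygrinow, Molla, Nagle and Oursler enters, through an auxiliary orientation in the spirit of Li's construction recalled above (rainbow triangles corresponding to directed triangles), producing a second rainbow triangle on an already-used edge. I expect this conversion to be the main obstacle, since the crude counts agree only up to a constant factor and the extra rainbow triangles must be extracted locally rather than globally.

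For part (ii) I would again argue by contradiction, now using that if $G$ has no $F_2$ then every $L_v$ has matching number at most $1$. By the theorem of Erd\H{o}s and Gallai, such a graph is either a triangle or a star; hence for each $v$ either at most three rainbow triangles pass through $v$, or \emph{all} rainbow triangles through $v$ share a common edge $vx_v$, forming a book. From $\sum_v e(L_v)=3R\ge\frac{n(n+1)}{4}$ some vertex $v^*$ satisfies $e(L_{v^*})\ge\frac{n+1}{4}>3$ once $n\ge13$, so $v^*$ is a book-centre: all rainbow triangles through $v^*$ contain a fixed edge $v^*x$. Writing $y_1,\dots,y_m$ for the apices of this book, I would then run the dictionary backwards at $x$ and at the $y_i$: a rainbow triangle through $x$ avoiding $v^*$, or through some $y_i$ avoiding $\{v^*,x\}$, would create two independent edges in $L_x$ or in $L_{y_i}$ and hence an $F_2$. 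Thus every rainbow triangle either contains the edge $v^*x$ (at most $n-2$ of these) or avoids all of $\{v^*,x,y_1,\dots,y_m\}$.

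The final step, and the main obstacle of part (ii), is to rule out this \emph{giant-book} configuration. Having confined all but $O(n)$ rainbow triangles to the induced subgraph on the $n-m-2\le\frac{3n}{4}$ remaining vertices, I would reapply the same dichotomy inside that subgraph and peel off books one edge at a time; each peeling removes only linearly many rainbow triangles, whereas the supply guaranteed by \cite{LNSZ} is quadratic, so after boundedly many steps the residual count is forced below the lower bound, a contradiction valid once $n\ge13$. Controlling the degradation of the color degrees across these peelings, so that a quadratic lower bound keeps applying to the residual subgraph, is the delicate point I expect to require the most care.
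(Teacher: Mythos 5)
For part (i) your proposal has a genuine gap, and it sits exactly where you flag it: the ``conversion'' step is not a technical nuisance but the whole difficulty, and it cannot be closed by counting at all, because the counts are not merely ``within a constant factor'' of each other --- they are exactly tight. If $G$ has no $B_2$, then $L_v$ is a matching and $rt(v)=e(L_v)\leq\lfloor d(v)/2\rfloor$. On the supply side, the strongest available local bound (the paper's Lemma \ref{lem:rt(v,N_i(v))}/Corollary \ref{Cor:rt(v)}, i.e.\ the sharpened local form of the \cite{LNSZ} count, applied in an edge-minimal counterexample at a vertex $v$ of maximum monochromatic degree, where $B(v)\geq 0$ by Lemma \ref{Lem:B(v)}) gives only $rt(v)\geq\frac{1}{2}\sum_{x\in N(v)}\bigl(d^c(x)+d^c(v)-n\bigr)\geq\frac{1}{2}d(v)$, which meets the matching cap with no slack; your own route through $S=N^c(v)$ is weaker still, since $e(G[S])\geq\frac{|S|}{4}(2|S|-n+1)=\frac{n+1}{4}$ when $|S|=\frac{n+1}{2}$, which never exceeds $\lfloor d(v)/2\rfloor$. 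So no orientation trick can manufacture a strict surplus; one is forced into an analysis of the equality case. That is precisely what the paper does for Theorem \ref{Thm1:Bk}: the assumption $rt(v,N_i(v))\leq(k-1)d_i(v)\leq\sum_{x\in N_i(v)}(d^c(x)+d^c(v)-n)$ forces $B_i(v)=0$ for every color class, the equality structure of Lemma \ref{Lem:B(v)} (namely $N_!(v)=N(v)\setminus N_1(v)$, maximum monochromatic degree at every $u\in N_!(v)$, and $E[N_1(v),N_!(v)]\subseteq RE(v)$) then yields a vertex $x\in N_!(v)$ with $d^c_{G[N[v]]}(x)\leq k$, and this forces $\Delta^{mon}(G)=1$, the properly-colored case having been disposed of separately by the uncolored degree argument (Lemma \ref{Lem:Bk}). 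None of these ideas appear in your sketch, so part (i) is not proved.

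Part (ii) is in much better shape, and your route is genuinely different from the paper's: the paper runs the Erd\H{o}s--Gallai/Berge partition $(V_0,V_1,\dots,V_p)$ and covering-number estimates for $G_{\triangle}(v)$ (Lemmas \ref{Lem:alpha} and \ref{Lem:beta(G)}), followed by a case analysis on $C_{\triangle}(v)$ versus $V_0$, whereas you only need the elementary dichotomy ``matching number $\leq 1$ $\Rightarrow$ star or triangle'' in link graphs plus one application of \cite{LNSZ}. Your steps up to the giant-book confinement are correct (with $m\geq 4$ leaves, the untouched-leaf argument at $x$ and at each $y_i$ works). The one misstep is the finish: re-applying \cite{LNSZ} to the residual induced subgraph would indeed fail, because the color degree hypothesis is not inherited under vertex deletion --- but no re-application is needed. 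Apply \cite{LNSZ} once to $G$ and make the peeling pure bookkeeping: each peel consumes $m_j+2$ vertices that are never touched again and removes exactly the $m_j$ page-triangles through the peeled spine (every other surviving triangle avoids the peeled set entirely, by the same untouched-leaf argument), and a peel is available whenever the current triangle count exceeds the current vertex count. At termination after $t$ peels, $R\leq|S_t|+\sum_{j<t}m_j=n-\sum_{j<t}(m_j+2)+\sum_{j<t}m_j=n-2t\leq n$, contradicting $R\geq\frac{n(n+1)}{12}>n$ for $n\geq 12$. With this repair your part (ii) closes for all $n\geq 13$, and for $k=2$ it is arguably shorter than the paper's proof of Theorem \ref{Thm2:Fk}; but as written, part (i) remains unproved and part (ii)'s last step needs the correction above.
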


This paper is organized as follows. In Section \ref{Sec:2}, we will
list our main theorems.
In Section \ref{Sec:3}, we introduce some necessary notations
and terminology and prove some lemmas. In Section \ref{Sec:4},
we prove general versions of Theorem \ref{Thm:Original}, i.e.,
Theorems \ref{Thm1:Bk} and \ref{Thm2:Fk}.
We conclude this paper with some more discussions on the sharpness of
our results, together some propositions on $F_k$ and $B_k$
on uncolored graphs.

\section{Main theorems}\label{Sec:2}
Our main results are given as follows.
\begin{theorem}\label{Thm1:Bk}
Let $k\geq 2$ be a positive integer
and $G$ be an edge-colored graph on $n\geq 3k-2$ vertices.
If $\delta^c(G)\geq \frac{n+k-1}{2}$
then $G$ contains $k$ rainbow triangles sharing one edge.
\end{theorem}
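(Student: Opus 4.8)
The plan is to prove Theorem~\ref{Thm1:Bk} by finding a single edge $uv$ that lies in at least $k$ rainbow triangles, i.e.\ a copy of $B_k$. The natural strategy is to first locate one rainbow triangle $uvw$ (which exists by Theorem~\ref{Thm:Li}, since $\frac{n+k-1}{2}\geq\frac{n+1}{2}$ for $k\geq 2$), and then to argue that the color degree surplus forces many common neighbors of $u$ and $v$ that can each be used to build a rainbow triangle on the edge $uv$. Concretely, for a fixed edge $uv$, a vertex $x\in N(u)\cap N(v)$ completes a \emph{rainbow} triangle on $uv$ precisely when the three colors $c(uv)$, $c(ux)$, $c(vx)$ are pairwise distinct; so the core task is a counting problem: bound below the number of such good $x$.

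First I would set up the auxiliary orientation idea from Li's construction described in the excerpt, combined with the counting framework of \cite{LNSZ}. The key quantitative input should be that the hypothesis $\delta^c(G)\geq\frac{n+k-1}{2}$ gives, for any two adjacent vertices $u,v$, a lower bound of roughly $2\delta^c-n\geq k-1$ on the size of a suitably defined set of common neighbors seeing distinct colors toward both endpoints. The plan is to choose the edge $uv$ not arbitrarily but as one maximizing (or at least making large) the number of rainbow triangles through it; a double-counting or averaging argument over all edges lying in rainbow triangles should then show that the best edge carries at least $k$ of them. This is where I expect to invoke the recent technique of Czygrinow, Molla, Nagle, and Oursler for finding rainbow cycles, to control the color repetitions that could otherwise collapse the count.

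The technical heart is handling color collisions: even when $x$ is a common neighbor of $u$ and $v$, one of $c(ux)=c(uv)$, $c(vx)=c(uv)$, or $c(ux)=c(vx)$ may hold, spoiling the rainbow triangle. The plan is to bound the number of ``bad'' $x$ of each type. Vertices $x$ with $c(ux)=c(uv)$ number at most the size of one color class at $u$, and since $d^c(u)$ is large the colors at $u$ are spread out; a careful accounting (charging each repeated color and using $\delta^c\geq\frac{n+k-1}{2}$) should leave at least $k$ surviving common neighbors. I would make this precise by defining, at $u$ and at $v$, a set of representative edges realizing the distinct colors, and estimating the overlap via inclusion--exclusion: $|N(u)\cap N(v)|\geq d(u)+d(v)-n$, then passing from degrees to color degrees with controlled loss.

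The main obstacle, and the step I expect to absorb most of the work, is precisely this passage from \emph{color} degree to an honest count of rainbow triangles on one fixed edge: color degree only guarantees one edge per color, so a single dominant color could in principle account for many common neighbors and contribute nothing to distinct rainbow triangles. Overcoming this requires the combination flagged in the introduction, namely using the Czygrinow--Molla--Nagle--Oursler machinery to reroute or select edges so that distinct colors are available at both $u$ and $v$ simultaneously, together with the \cite{LNSZ} counting bound $\frac{1}{6}\delta^c(2\delta^c-n)n$ on the total number of rainbow triangles to guarantee, by averaging over the $O(n^2)$ edges, that some edge is heavy. I would finish by checking the threshold $n\geq 3k-2$ makes the resulting inequality $2\delta^c-n\geq k-1$ yield at least $k$ triangles on the chosen edge, completing the construction of $B_k$.
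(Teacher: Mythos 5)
Your proposal correctly identifies the crux (color collisions when passing from color degree to an honest count of rainbow triangles on one fixed edge), but the mechanism you propose to overcome it fails quantitatively. Averaging the global bound of \cite{LNSZ} over all edges gives too little: with $\delta^c(G)=\frac{n+k-1}{2}$ that bound guarantees roughly $\frac{(k-1)n(n+k-1)}{12}$ rainbow triangles, each counted on its $3$ edges, while $G$ may have close to $\binom{n}{2}$ edges, so the average number of rainbow triangles per edge is about $\frac{(k-1)(n+k-1)}{2(n-1)}\approx\frac{k-1}{2}$, only half of what you need. A vertex-local version does no better in the worst case: for a fixed $v$, each rainbow triangle through $v$ uses two of the edges at $v$, and the per-vertex count (Corollary \ref{Cor:rt(v)}) gives $\sum_{x\in N(v)}rt(v,x)=2\,rt(v)\geq (k-1)d(v)+B(v)$ with $B(v)\geq 0$, so pigeonhole yields only an edge lying in at least $k-1$ rainbow triangles --- one short of $k$. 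The entire difficulty of the theorem is the extremal case $B(v)=0$, where every edge at $v$ could lie in exactly $k-1$ rainbow triangles, and your plan contains no step that deals with it; the Czygrinow--Molla--Nagle--Oursler machinery is named but never instantiated, and in the paper the restriction-color technique is in fact used earlier, inside the proof of the counting lemma itself (Lemma \ref{lem:rt(v,N_i(v))}), not as a patch applied after averaging.

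Concretely, the paper closes the gap you leave open as follows: take an edge-minimal counterexample, pick $v$ with $d^{mon}(v)=\Delta^{mon}(G)$, and apply pigeonhole per color class to conclude $rt(v,N_i(v))\leq (k-1)d_i(v)$ for every $i$; via Lemma \ref{lem:rt(v,N_i(v))} and the color degree hypothesis this forces $B(v)=B_i(v)=0$ for all $i$, and the structural consequences of this tightness (Lemma \ref{Lem:B(v)}(a)--(c), together with edge-minimality) produce a vertex $x\in N_!(v)$ with $d^c_{G[N[v]]}(x)\leq k$, which contradicts $\delta^c(G)\geq\frac{n+k-1}{2}$ unless $\Delta^{mon}(G)=1$; the properly colored case $\Delta^{mon}(G)=1$ is then settled by the uncolored Lemma \ref{Lem:Bk}, and it is there --- not in any inequality of the form $2\delta^c-n\geq k-1$, which holds for every $n$ under the hypothesis --- that the assumption $n\geq 3k-2$ is actually used. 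So your proposal is missing the two load-bearing components of the argument: the analysis of the tight (extremal) configuration and the uncolored base case.
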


\begin{theorem}\label{Thm2:Fk}
Let $k\geq 2$ be a positive integer and
$G$ be an edge-colored graph on $n\geq 2k+9$ vertices.
If $\delta^c(G)\geq \frac{n+2k-3}{2}$
then $G$ contains $k$ rainbow triangles only sharing one common
vertex.

\end{theorem}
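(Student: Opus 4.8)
The plan is to fix a single vertex $v$ to serve as the common apex of all $k$ triangles and then to locate, inside the neighbourhood of $v$, a matching of size $k$ whose edges each close up to a rainbow triangle with $v$. Concretely, I would choose $v$ together with a \emph{rainbow neighbourhood} $N^{*}\subseteq N(v)$, i.e.\ a set of $d^{c}(v)$ vertices for which the edges $\{vx:x\in N^{*}\}$ receive pairwise distinct colors; write $\alpha_{x}=c(vx)$. Call an edge $xy$ with $x,y\in N^{*}$ \emph{good} if $\alpha_{x},\alpha_{y},c(xy)$ are pairwise distinct, and let $R_{v}$ be the graph on $N^{*}$ whose edges are the good ones. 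Vertex-disjoint good edges give rainbow triangles $vx_{i}y_{i}$ meeting only in $v$, so the whole theorem reduces to the single assertion $\nu(R_{v})\ge k$ for a suitably chosen $v$. The organizing tool for this last step is the Erd\H{o}s--Gallai machine for matchings: a graph on $N$ vertices with no matching of size $k$ has at most $\max\{\binom{2k-1}{2},\ \binom{k-1}{2}+(k-1)(N-k+1)\}$ edges, whose two extremal configurations (a dense clique-like blob versus a few dominating vertices) will also guide the case analysis below.

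First I would extract the minimum-degree information forced by the global hypothesis. Since $|N^{*}|=d^{c}(v)\ge\frac{n+2k-3}{2}$, the complement $V(G)\setminus N^{*}$ has at most $\frac{n-2k+3}{2}$ vertices, so for each $x\in N^{*}$ one gets $\deg_{N^{*}}(x)\ge d^{c}(x)-|V(G)\setminus N^{*}|\ge\frac{n+2k-3}{2}-\frac{n-2k+3}{2}=2k-3$. Hence $G[N^{*}]$ has minimum degree at least $2k-3$ and at least $\frac{(2k-3)|N^{*}|}{2}$ edges. The good-edge count is $e(R_{v})=e(G[N^{*}])-b$, where a \emph{bad} edge $xy$ satisfies $c(xy)\in\{\alpha_{x},\alpha_{y}\}$; because $\alpha_{x}\ne\alpha_{y}$ exactly one of these holds, and charging each bad edge to the offending endpoint, the bad edges at $x$ number at most the repetitions of the single color $\alpha_{x}$ among the edges leaving $x$ inside $N^{*}$, which is controlled by the color-repetition surplus $\deg(x)-d^{c}(x)$. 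In the tight case $\deg(x)=d^{c}(x)$ there are no repeated colors at $x$ at all, so $b=0$ and $R_{v}=G[N^{*}]$ is a graph of minimum degree $2k-3\ge k$ on at least $2k$ vertices; the standard fact that such a graph has a matching of size $k$ then finishes this regime.

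The genuine difficulty, and the step I expect to be the main obstacle, is the opposite regime in which $e(G[N^{*}])$ is large but a few vertices $x$ carry many bad edges, i.e.\ the star color $\alpha_{x}$ repeats more than $k-3$ times among the edges of $x$ into $N^{*}$, dropping $\deg_{R_{v}}(x)$ below $k$. The plan is to remove these heavy vertices by a local correction: for a color that repeats badly I would re-select a different representative edge $vx'$ from $v$ (so that $\alpha$ is no longer a star color at a heavy endpoint), or relocate the offending vertex out of $N^{*}$, thereby restoring minimum degree at least $k$ in the good-edge graph without letting $|N^{*}|$ drop below $2k$. The surplus room in the hypotheses $\delta^{c}(G)\ge\frac{n+2k-3}{2}$ and $n\ge 2k+9$ is exactly what should make these exchanges affordable and should also dispatch the finitely many degenerate configurations; the $\binom{2k-1}{2}$ threshold in the Erd\H{o}s--Gallai bound is the inequality that must be verified by hand once $n\ge 2k+9$ to rule out the small clique-like extremal case.

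Finally, I would use the case $k=2$ of Theorem \ref{Thm:Original}(ii) both as the anchor of the argument and as the base of an induction. Deleting the two private vertices $x_{1},y_{1}$ of one rainbow triangle at $v$ lowers $n$ by $2$ and each color degree by at most $2$, so the requirement $\delta^{c}\ge\frac{n+2k-3}{2}$ passes verbatim to $\delta^{c}\ge\frac{(n-2)+2(k-1)-3}{2}$ on $G-\{x_{1},y_{1}\}$, and $n\ge 2k+9$ becomes $n-2\ge 2(k-1)+9$; the parameters match exactly, so the only content of the inductive step is to re-establish a single rainbow triangle through the \emph{same} vertex $v$ on the smaller graph. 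That reduces to keeping $R_{v}$ nonempty after each deletion, which is precisely the good-edge accounting developed above, with the counting technique behind \cite{LNSZ} available as a quantitative backstop for the number of rainbow triangles through $v$.
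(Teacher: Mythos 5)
Your overall frame---fix an apex $v$, form the graph of edges $xy$ inside $N(v)$ that close rainbow triangles with $v$, and reduce the theorem to finding a matching of size $k$ in that graph via the Erd\H{o}s--Gallai machinery---is essentially the paper's frame (there the graph is $G_{\triangle}(v)$ and the absence of an $F_k$ gives $\alpha'(G_{\triangle}(v))\leq k-1$). But your proposal collapses exactly where the real work lies: the ``bad'' edges with $c(xy)\in\{c(vx),c(vy)\}$. Your accounting is complete only in the degenerate regime where $d(x)=d^c(x)$ for every vertex, and even there it fails for $k=2$, since the minimum-degree bound $2k-3=1$ on $R_v$ does not force a matching of size $2$ (a star has minimum degree $1$ and matching number $1$). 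In the general regime your plan is to ``re-select a different representative edge $vx'$'' or ``relocate the offending vertex out of $N^{*}$,'' but a bad edge $xy$ with $c(xy)=c(vy)$ where the color $c(vy)$ appears only once at $v$ (i.e., $y\in N_!(v)$ in the paper's notation) cannot be removed by re-selection, and such edges are invisible to the surplus $d(x)-d^c(x)$ at $x$: at $y$ they form a monochromatic star, whose size is not bounded by any color-degree hypothesis. This is precisely the term $\sum_{y\in N_!(v)}d_{c(vy)}(y,N_i(v))$ in Lemma \ref{lem:rt(v,N_i(v))}, and the paper controls it only through a conjunction of devices absent from your proposal: an edge-minimal counterexample (killing monochromatic $P_3$'s), the choice of $v$ with $d^{mon}(v)=\Delta^{mon}(G)$ so that $d_{c(vy)}(y,N(v))\leq \Delta^{mon}(G)-1$ and hence $B(v)\geq 0$ (Lemma \ref{Lem:B(v)}), the restriction-color counting of \cite{CMNO21} to get the edge bound $rt(v)>\binom{2k-1}{2}$, and then---because one only obtains an edge count, never a minimum-degree bound on $G_{\triangle}(v)$---the Berge/Erd\H{o}s--Gallai partition $\{V_i: 0\le i\le p\}$ and covering analysis of Lemmas \ref{Lem:alpha} and \ref{Lem:beta(G)}, followed by a further two-case structural argument. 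None of this is recoverable from ``the surplus room in the hypotheses should make these exchanges affordable.''

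The inductive scheme in your last paragraph is also broken. Applying the statement for $k-1$ to $G-\{x_1,y_1\}$ yields $k-1$ rainbow triangles sharing \emph{some} common vertex $w$, with no guarantee that $w=v$, so they cannot be combined with $vx_1y_1$. If instead you iterate at the fixed apex $v$, then after deleting the $2(k-1)$ private vertices of the triangles already found, the remaining graph $G'$ has order $n'=n-2k+2$ and only $\delta^c(G')\geq \frac{n'-1}{2}$, which is below the threshold $\frac{n'+1}{2}$ of Theorem \ref{Thm:Li}; and even above that threshold, Li's theorem produces a rainbow triangle somewhere, not through the prescribed vertex $v$, so ``keeping $R_v$ nonempty'' is exactly the unproven vertex-localized statement, not a consequence of known results. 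Finally, invoking Theorem \ref{Thm:Original}(ii) as the base case is circular: in this paper that statement is the special case $k=2$ of Theorem \ref{Thm2:Fk}, not an independently established result.
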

Setting $\delta^c(G)= \frac{n+k-1}{2}$ in Theorem \ref{Thm1:Bk}, the following
example shows that the bound ``$n\geq 3k-2$" is sharp.
Furthermore, it follows from Example 1 that
the tight color degree should be at least $\delta^c\geq \frac{n+k}{2}$
when $n\leq3k-3$.

\vspace{2mm}

\noindent{\bf Example 1.}
Let $G$ be a properly-colored balanced complete 3-partite graph $G[V_1, V_2, V_3]$
with $|V(G)|=3k-3$ and $|V_1|=|V_2|=|V_3|=k-1$, where $k\geq 1$ is a
positive integer. Then for each vertex $v\in V(G)$,
$d^c(v)=d(v)=2k-2=\frac{n+k-1}{2}$
while $G$ contains no $B_k$ and $F_k$ (see Figure \ref{fig1}).

\begin{figure}[htbp!]
  \centering
\scalebox{1}{\includegraphics[width=2.3in,height=2.0in]{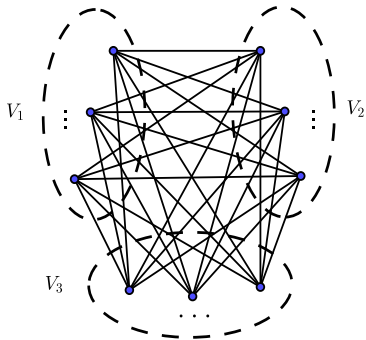}}\\
\captionsetup{font={scriptsize}}
\caption{An extremal graph for Theorem \ref{Thm1:Bk}}
\end{figure}\label{fig1}
The main novelty of our proof of Theorem \ref{Thm1:Bk} is a combination of the recent
new technique for finding rainbow cycles due to Czygrinow, Molla, Nagle, and Oursler \cite{CMNO21}
and some recent counting technique from \cite{LNSZ}. In particular, Czygrinow et al. \cite{CMNO21} extended
H. Li's theorem by proving that for every integer $\ell$, every edge-colored graph $G$
on $n\geq 432\ell$ many vertices satisfying $\delta^c(G)\geq\frac{n+1}{2}$ admits a
rainbow $\ell$-cycle $C_{\ell}$. One novel concept introduced in \cite{CMNO21} is called restriction
color, and it will be used in our proof.
The proof of Theorem \ref{Thm2:Fk}
is with the aid of the machine implicitly in the work of Tur\'an number
for matching numbers due to Erd\H{o}s and Gallai \cite{EG59}. (For details,
see the proof of Lemma \ref{Lem:alpha}.)

Meantime, both Theorem \ref{Thm1:Bk} and Theorem \ref{Thm2:Fk} improve Theorem \ref{Thm:Li} (by setting $k=2$).
On the other hand, Theorem \ref{Thm2:Fk} slightly improves Theorem 9 in \cite{LNSZ} by a different method.

\vspace{3mm}

\section{Additional notations and lemmas}\label{Sec:3}
Some of our notations come from \cite{CMNO21,LNSZ}. Let $G$
be an edge-colored graph. Let $\mathcal{C}: E(G)\rightarrow \{1,2,\ldots,k\}$
be an edge-coloring of $G$. For a color $\alpha\in\mathcal{C}(G)$ and a vertex $v\in V(G)$,
define the $\alpha$-\emph{neighborhood} of $v$ as
$$N_{\alpha}(v)=\{u\in N(v)\mid c(uv)=\alpha\},$$
$\alpha$-\emph{neighborhood in} $X$ of $v$ as
$$N_{\alpha}(v,X)=\{u\in N(v)\cap X\mid c(uv)=\alpha\}$$
where $X\subseteq V(G)$, and $N(v)$ is the neighborhood of $v$ in $G$.
Define $$N_!(v)=\bigcup_{\alpha\in \mathcal{C}(G)}\{N_{\alpha}(v) \mid |N_{\alpha}(v)|=1\}.$$
For the sake of simplicity,
let $d_\alpha(v)=|N_\alpha(v)|$ and $d_\alpha(v,X)=|N_{\alpha}(v,X)|$.
Moreover, let $N[v]=N(v)\cup\{v\}$.
The \emph{monochromatic degree of $v$}, denoted by $d^{mon}(v)$,
is the maximum number of edges incident to $v$ colored with a same color
(i.e., $d^{mon}(v)=\text{max}\{d_{\alpha}(v) \mid \alpha\in \mathcal{C}(G)\}$.)
For a graph $G$, we denote the \emph{monochromatic degree} of $G$
by $\Delta^{mon}(G)=\text{max}\{d^{mon}(v)\mid v\in V(G)\}$.

W.l.o.g., assume that $d_1(v)\geq d_2(v)\geq \cdots\geq d_s(v)$ and $s:=s(v)=d^c(v)$
for each vertex $v\in V(G)$. (When there is no danger of ambiguity, we use
$s$ for short.)

The following concept of restriction was firstly
introduced in \cite[Section~3]{CMNO21}.
\begin{Def}[restriction color \cite{CMNO21}]
Let $G$ be an edge-colored graph.
Fix $v\in V(G)$ and $X\subseteq N(v)$.
Suppose $\alpha=c(xy)$ for $x\in X\cap N(y)$ and $y\in V(G)\setminus\{v\}$.
We say $(v,X)$ restricts color $\alpha$ for $y$ if $vxy$ is a rainbow $P_3$ and $\alpha \notin \mathcal{C}(y,N(y)\setminus X)$.
Denote by $\sigma_{v,X}(y)$
the number of colors $\alpha\in \mathcal{C}(E)$ restricted for $y$ by $(v,X)$.
\end{Def}

Denote by $rt(v)$  the number of
rainbow triangles containing $v$;
by $rt(v,x)$ the number of
rainbow triangles containing both $v$ and $x$ (i.e.,
containing the edge $vx$);
and $rt(v,X)=\sum_{x\in X}rt(v,x)$.

According to the definition of restriction color,
we have the following proposition.

\begin{prop}
Let $v$ be a vertex of $G$ and $x\in N(v)$.
Then $rt(v,x)\geq \sigma_{v,N(v)\setminus N_{c(vx)}(v)}(x)$
and $rt(v,N_{c(vx)}(v))\geq\sum_{x\in N_{c(vx)}(v)}\sigma_{v,N(v)\setminus N_{c(vx)}(v)}(x)$.
\end{prop}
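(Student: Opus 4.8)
The plan is to observe first that the second inequality is an immediate consequence of the first together with the definition $rt(v,X)=\sum_{x\in X}rt(v,x)$, so the heart of the matter is the single-edge bound $rt(v,x)\geq\sigma_{v,X}(x)$ where $X=N(v)\setminus N_{c(vx)}(v)$. I would prove this by exhibiting an injection from the set of colors restricted for $x$ by $(v,X)$ into the set of rainbow triangles containing the edge $vx$.

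To build the injection, fix a color $\alpha$ restricted for $x$ by $(v,X)$. Unfolding the definition of restriction color, there is a witness $w\in X\cap N(x)$ with $c(wx)=\alpha$ such that $vwx$ is a rainbow $P_3$ and $\alpha\notin\mathcal{C}(x,N(x)\setminus X)$. Since $w\in X\subseteq N(v)$ and $w\in N(x)$, the vertices $v,w,x$ span a triangle through $vx$; I then map $\alpha$ to this triangle. The main work is to verify that $vwx$ is rainbow. The rainbow-$P_3$ condition gives $c(vw)\neq c(wx)=\alpha$; the membership $w\in X=N(v)\setminus N_{c(vx)}(v)$ gives $c(vw)\neq c(vx)$; and the remaining inequality $c(wx)=\alpha\neq c(vx)$ is where the restriction condition is used.

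This last step is the point requiring care and is, to my mind, the main (if small) obstacle. The key observation is that $v\in N(x)$ while $v\notin X$ (because $X\subseteq N(v)$ and $v\notin N(v)$), so $v\in N(x)\setminus X$ and hence $c(vx)\in\mathcal{C}(x,N(x)\setminus X)$; since $\alpha\notin\mathcal{C}(x,N(x)\setminus X)$, we conclude $\alpha\neq c(vx)$. With the three edge-colors pairwise distinct, $vwx$ is rainbow and is counted by $rt(v,x)$.

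Injectivity is then routine: if two restricted colors $\alpha\neq\alpha'$ produced the same triangle, their witnesses would coincide, $w=w'$, forcing $\alpha=c(wx)=c(w'x)=\alpha'$, a contradiction; so distinct restricted colors yield distinct triangles and $rt(v,x)\geq\sigma_{v,X}(x)$. Finally, for the second inequality I would apply this bound to each $x'\in N_{c(vx)}(v)$, noting that $c(vx')=c(vx)$ forces $N_{c(vx')}(v)=N_{c(vx)}(v)$ so that the set $X$ is common to all terms, and then sum over $x'\in N_{c(vx)}(v)$ using $rt(v,N_{c(vx)}(v))=\sum_{x'\in N_{c(vx)}(v)}rt(v,x')$.
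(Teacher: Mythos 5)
Your proposal is correct and follows essentially the same route as the paper: each restricted color, via its witness $w\in N(v)\setminus N_{c(vx)}(v)$, yields a rainbow triangle through $vx$ (rainbow $P_3$ gives $c(vw)\neq c(wx)$, membership in $X$ gives $c(vw)\neq c(vx)$, and the restriction condition gives $c(wx)\neq c(vx)$), and distinct restricted colors give distinct triangles. In fact you make explicit two details the paper leaves implicit --- that $\alpha\neq c(vx)$ follows because $v\in N(x)\setminus X$, and that the set $X$ is the same for every $x$ in the monochromatic neighborhood when summing --- so your write-up is, if anything, slightly more complete.
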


\begin{proof}
For $y\in N(v)\setminus N_{c(vx)}(v)$, we have $c(vx)\neq c(vy)$.
If $c(xy)$ is a restriction color for $x$ to $v$
with respect to $N(v)\setminus N_{c(vx)}(v)$,
then $c(vy)\neq c(xy)$ and $c(xy)\notin \mathcal{C}(x,N(x)\setminus (N(v)\setminus N_{c(vx)}(x)))$,
and $c(xy)\neq c(xv)$. Thus,
$vxyv$ is a rainbow triangle.
It follows $rt(v,x)\geq \sigma_{v,N(v)\setminus N_{c(vx)}(v)}(x)$
and $rt(v,N_{c(vx)}(v))\geq\sum_{x\in N_{c(vx)}(v)}\sigma_{v,N(v)\setminus N_{c(vx)}(v)}(x)$.
\end{proof}

\vspace{4mm}
\noindent{\bf Remark 1.} Throughout this paper,
we say that an edge-colored graph $G$ is \emph{edge-minimal} if
for any $e\in E(G)$, there exists a vertex $v$ incident to $e$ such that
$d^c_{G-e}(v)<d^c_G(v)$.
Hence, $G$ contains neither monochromatic paths of length
3 nor monochromatic triangles.

The form of the following lemma is motivated by \cite{LNSZ},
but its proof is a mixture of techniques from \cite{CMNO21,LNSZ}.
For the sake of simplicity, define the color set $\mathcal{C}(G)=\{1,2,\cdots,c(G)\}$.

\begin{lem}\label{lem:rt(v,N_i(v))}
Let $G$ be an edge-colored graph which is edge-minimal.
Then for $1\leq i\leq s=d^c(v)$,
we have,
$$\begin{aligned}
rt(v,N_i(v))
\geq& \sum_{x\in N_i(v)}\Big(d^c(x)+d^c(v)-n\Big)+
d_i(v)\sum_{1\leq j\leq s}\Big(d_j(v)-1\Big)\\
&-d_i(v)\big(d_i(v)-1\big)-\sum_{y\in N_!(v)}d_{c(vy)}(y,N_i(v)).
\end{aligned}$$
\end{lem}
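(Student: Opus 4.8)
The plan is to count rainbow triangles through the edge $vx$ for each $x\in N_i(v)$ by counting, for each $x$, the colors that $(v, N(v)\setminus N_i(v))$ restricts for $x$, and then to apply Proposition~1 together with a counting argument in the spirit of \cite{LNSZ}. First I would fix $x\in N_i(v)$ and set $X=N(v)\setminus N_i(v)$, so that by Proposition~1 we have $rt(v,x)\geq \sigma_{v,X}(x)$; summing over $x\in N_i(v)$ gives $rt(v,N_i(v))\geq \sum_{x\in N_i(v)}\sigma_{v,X}(x)$. The task then reduces to establishing a lower bound on $\sigma_{v,X}(x)$ for each fixed $x$, and the right-hand side of the lemma should emerge after summation.

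To bound $\sigma_{v,X}(x)$ from below, I would count colors appearing at $x$ but controlled at $v$. The natural starting point is the inclusion--exclusion style estimate $d^c(x)+d^c(v)-n$, which counts colors that must appear on edges from $x$ into the common neighborhood determined by the color-degree surplus; this is precisely the term $\sum_{x\in N_i(v)}(d^c(x)+d^c(v)-n)$. The edge-minimality hypothesis (Remark~1) is crucial here: since $G$ contains no monochromatic $P_3$ and no monochromatic triangle, each color class around a vertex behaves well, which lets me convert color-degree information into counts of distinct colors on the relevant edges and control overcounting. The term $d_i(v)\sum_{1\leq j\leq s}(d_j(v)-1)$ should arise from counting, over the $d_i(v)$ choices of $x\in N_i(v)$, the edges from $v$ into each color class $N_j(v)$ that can extend a rainbow $P_3$ $vxy$ into a rainbow triangle; subtracting $1$ in each class accounts for the edge $vx$ itself or the need to avoid the repeated color.

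The two subtracted correction terms quantify the failures of the restriction condition. The term $d_i(v)(d_i(v)-1)$ removes the contribution of pairs within the same color class $N_i(v)$, where the color $c(vy)=c(vx)=i$ would coincide and hence not produce a rainbow triangle through $vx$; this is the ``diagonal'' overcount. The final term $\sum_{y\in N_!(v)}d_{c(vy)}(y,N_i(v))$ handles the vertices $y$ for which the color $c(vy)$ appears uniquely at $v$ (i.e., $y\in N_!(v)$): for such $y$, a color that would otherwise be restricted might already appear on an edge from $x$ outside $X$ through this unique neighbor, so it must be discounted. This is where I expect the main obstacle to lie: carefully matching the definition of restriction color---specifically the clause $\alpha\notin\mathcal{C}(y,N(y)\setminus X)$---against these potential failures, and verifying that the only colors lost are exactly those counted by $\sum_{y\in N_!(v)}d_{c(vy)}(y,N_i(v))$ with no further deficit. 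The bookkeeping must ensure that each rainbow triangle is counted with the correct multiplicity and that the restriction condition is neither under- nor over-applied; threading the $N_!(v)$ correction through the counting cleanly, while keeping the edge-minimality consequences in play, is the delicate heart of the argument.
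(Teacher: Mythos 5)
Your opening move is exactly the paper's: take $X=Y_i:=N(v)\setminus N_i(v)$, apply Proposition 1 to get $rt(v,N_i(v))\geq\sum_{x\in N_i(v)}\sigma_{v,Y_i}(x)$, and so reduce the lemma to a lower bound on the restriction counts. But everything after that reduction is a plan rather than a proof, and you yourself flag the decisive step as ``the delicate heart'' that you have not carried out --- so there is a genuine gap. The missing argument is a classification of the $d^c(x)$ colors at each fixed $x\in N_i(v)$: a color either appears on some edge from $x$ to a vertex of $V(G)\setminus(Y_i\cup\{x\})$, and since distinct colors require distinct endpoints there are at most $n-|Y_i|-1$ such colors; or it appears only on edges from $x$ into $Y_i$. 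In the latter case take such an edge $xy$, $y\in Y_i$: either $c(xy)\neq c(vy)$, in which case $vxy$ is a rainbow $P_3$ (note $c(xy)\neq c(vx)$ automatically, since this color appears only towards $Y_i$) and the color is restricted, hence counted by $\sigma_{v,Y_i}(x)$; or $c(xy)=c(vy)$, and here is precisely where edge-minimality is used: if $d_{c(vy)}(v)\geq 2$ there would be a monochromatic path of length $3$, so $y\in N_!(v)$. Encoding these bad edges as arcs oriented from $y$ to $x$ in an auxiliary digraph $D$, one gets $d^c(x)\leq (n-|Y_i|-1)+\sigma_{v,Y_i}(x)+d^-_D(x)$ together with $\sum_{x\in N_i(v)}d^-_D(x)\leq\sum_{y\in N_!(v)}d_{c(vy)}(y,N_i(v))$. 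Without this classification there is no per-vertex inequality to sum, and the lemma does not follow.

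Moreover, your reading of the remaining terms indicates the plan would not close as stated. The terms $d_i(v)\sum_{1\leq j\leq s}(d_j(v)-1)$ and $-d_i(v)(d_i(v)-1)$ are not separate combinatorial counts (``extendable rainbow $P_3$'s per color class'' and ``diagonal overcounts''): they fall out purely algebraically when the inequality $\sigma_{v,Y_i}(x)\geq d^c(x)+|Y_i|+1-d^-_D(x)-n$ is summed over the $d_i(v)$ vertices of $N_i(v)$ and one substitutes $|Y_i|=d(v)-d_i(v)$ and $d(v)=d^c(v)+\sum_{1\leq j\leq s}(d_j(v)-1)$. Likewise, the $N_!(v)$ correction does not account for colors that ``already appear on an edge from $x$ outside $X$'' --- such colors are harmless, since they land in the $n-|Y_i|-1$ bucket --- but for edges $xy$ into $Y_i$ whose color equals $c(vy)$, so that the rainbow-$P_3$ clause in the definition of restriction fails. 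Identifying that failure mode, and using edge-minimality to confine it to $y\in N_!(v)$, is the idea your proposal is missing.
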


\noindent{\bf Proof of Lemma \ref{lem:rt(v,N_i(v))}.}
For convenience, let $X_i=N_i(v)$, $Y_i=N(v)\setminus X_i$ for $1\leq i\leq s$,
and $d_i$ for $d_i(v)$ in the following. Let $s=d^c(v)$.
For $v\in V(G)$ and $1\leq i\leq s$, define a directed graph $D_i$
on $V(D_i)=X_i\cup Y_i$
as follows: the arc $\overrightarrow{yx}$ exists if and only if the following hold:
(1) $x\in X_i$ and $y\in Y_i$;
and (2) $c(xy)=c(vy)$. Since $G$ is edge-minimal, the existence of
$\overleftarrow{xy}$ gives $d_{c(vy)}(v)=1$. (Indeed, since the arc
$\overleftarrow{xy}$ exists, we have $c(xy)=c(vy)$. If
$d_{c(vy)}(v)\geq 2$, there exists a monochromatic path of length 3,
a contradiction!)
Hence $y\in N_!(v)$. Evidently, $d^+_D(y)\leq d_{c(vy)}(y,X_i)$.
Thus,
\begin{gather}\label{e:sum-d^-_D(x)}
\sum_{x\in X_i}d^-_D(x)=\sum_{y\in Y_i}d^+_D(y)
=\sum_{y\in N_!(v)}d^+_D(y)
\leq \sum_{y\in N_!(v)}d_{c(vy)}(y,X_i).
\end{gather}
As for $x\in X_i$, there are
at most $d^-_D(x)+\sigma_{v,Y_i}(x)$ colors
which only appear on edges from $x$ to $Y_i$.
Then there are at least $d^c(x)-d^-_D(x)-\sigma_{v,Y_i}(x)$
vertices in $V(G)\setminus (Y_i\cup \{x\})$.
Therefore,
$$\begin{aligned}
n-|Y_i|-1\geq d^c(x)-d^-_D(x)-\sigma_{v,Y_i}(x)
\end{aligned}$$
\begin{gather}\label{e:d^-_D(x)}
\Rightarrow~~~\sigma_{v,Y_i}(x)\geq d^c(x)+|Y_i|+1-d^-_D(x)-n
\end{gather}

Note that $|Y_i|=d(v)-d_i=\sum_{1\leq j\leq s}d_j-d_i=d^c(v)+\sum_{1\leq j\leq s}(d_j-1)-d_i$.
Combining Ineqs (\ref{e:sum-d^-_D(x)}) and (\ref{e:d^-_D(x)}), we can get that
$$\begin{aligned}
&\sum_{x\in X_i}\sigma_{v,Y_i}(x)\\
\geq& \sum_{x\in X_i}\Big(d^c(x)+|Y_i|+1-n\Big)
-\sum_{y\in N_!(v)}d_{c(vy)}(y,X_i)\\
\geq& \sum_{x\in X_i}\Big(d^c(x)+d^c(v)+\sum_{1\leq j\leq s}(d_j-1)-d_i+1-n\Big)
-\sum_{y\in N_!(v)}d_{c(vy)}(y,X_i)\\
\geq& \sum_{x\in X_i}\Big(d^c(x)+d^c(v)-n\Big)+
d_i\Big(\sum_{1\leq j\leq s}(d_j-1)\Big)-d_i\big(d_i-1\big)-\sum_{y\in N_!(v)}d_{c(vy)}(y,X_i).
\end{aligned}$$

The proof is complete.
$\hfill\blacksquare$

\vspace{4mm}
Since $rt(v)\geq \frac{1}{2}\sum_{1\leq i\leq d^c(v)}rt(v,N_i(v))$,
we have the following corollary.

\begin{cor}\label{Cor:rt(v)}
Let $G$ be an edge-colored graph which is edge-minimal.
Then for $1\leq i\leq d^c(v)=s$,
we have,
$$
\begin{aligned}
rt(v)
\geq& ~\frac{1}{2}~\Big(
\sum_{x\in N(v)}\big(d^c(x)+d^c(v)-n\big)
+d(v)\big(\sum_{1\leq j\leq s}(d_j(v)-1)\big)\\
&-\sum_{1\leq i\leq s}d_i(v)\big(d_i(v)-1\big)-\sum_{y\in N_!(v)}d_{c(vy)}(y,N(v))\Big).
\end{aligned}
$$
$\hfill\blacksquare$
\end{cor}

\section{Edge-colored books and friendship graphs}\label{Sec:4}

\subsection{Edge-colored books}
The aim of this subsection is to prove Theorem \ref{Thm1:Bk}.

Before the proof, it's better to introduce a notation,
which indeed is some term from Corollary \ref{Cor:rt(v)}.
For $i\in [1,s]$,
let
\begin{equation}\label{e:B_i(v)}
B_i(v)=d_i(v)\Big(\sum_{1\leq j\leq s}\big(d_j(v)-1\big)\Big)-d_i(v)\big(d_i(v)-1\big)-\sum_{y\in N_!(v)}d_{c(vy)}(y,N_i(v)).
\end{equation}
Set
\begin{equation}\label{e:B}
\begin{aligned}
B(v)&=\sum_{1\leq i\leq s}B_i(v)\\
&=d(v)\sum_{1\leq j\leq s}\big(d_j(v)-1\big)
-\sum_{1\leq i\leq s}d_i(v)\big(d_i(v)-1\big)-\sum_{y\in N_!(v)}d_{c(vy)}(y,N(v)).\\
&=\big(d(v)-d_1(v)\big)\big(d_1(v)-1\big)-\sum_{y\in N_!(v)}d_{c(vy)}(y,N(v))+\sum_{2\leq i\leq s}\big(d(v)-d_i(v)\big)\big(d_i(v)-1\big).
\end{aligned}
\end{equation}

For two disjoint subsets $V_1,V_2\subseteq V(G)$,
the set of colors appearing on the edges between $V_1$ and $V_2$ in $G$
is denoted by $\mathcal{C}(V_1,V_2)$. We say an edge $xy\in E(G)$
is a \emph{rainbow triangle edge} of $v$ if $vxyv$ is a rainbow triangle.
Denote by $RE(v)$ the edge set of rainbow triangle edges of $v$.

First we prove a result on a vertex with maximum monochromatic degree,
i.e., a vertex $v\in V(G)$ with $d^{mon}(v)=\Delta^{mon}(G)$. Recall
that we assume $d_1(v)\geq d_2(v)\geq \ldots \geq d_s(v)$, where
$s=d^c(v)$. Thus, $d_1(v)=d^{mon}(v)$. Also, $N_1(v)$ means the set of all
vertices $u\in N(v)$ such that $c(uv)=1$.

\begin{lem}\label{Lem:B(v)}
Let $G$ be an edge-colored graph.
Then for a vertex $v\in V(G)$ with $d^{mon}(v)=\Delta^{mon}(G)$
we have $B(v)\geq 0$. If $\Delta^{mon}(G)\geq 2$
and $B(v)=0$ then there hold:

(a) $N_!(v)=N(v)\setminus N_1(v)$;

(b) $d^{mon}(u)=\Delta^{mon}(G)$ for all $u\in N_!(v)$; and

(c) if $B_1(v)=0$, then $E[N_1(v),N_!(v)]\subseteq RE(v)$.
\end{lem}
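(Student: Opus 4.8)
The plan is to work from the compact rewriting $B(v)=\sum_{i=1}^{s}(d(v)-d_i(v))(d_i(v)-1)-\sum_{y\in N_!(v)}d_{c(vy)}(y,N(v))$ already recorded in (\ref{e:B}), and to control each piece using the hypothesis $d^{mon}(v)=\Delta^{mon}(G)$. Every summand $(d(v)-d_i(v))(d_i(v)-1)$ is nonnegative since $d(v)\ge d_i(v)$ and $d_i(v)\ge 1$. For the subtracted sum, the key observation is that each $y\in N_!(v)$ is the unique $c(vy)$-neighbour of $v$, so $c(vy)$ is a singleton colour at $v$; moreover $d_{c(vy)}(y)\le\Delta^{mon}(G)=d_1(v)$ because $v$ attains the maximum monochromatic degree, and since the edge $vy$ is itself coloured $c(vy)$ while $v\notin N(v)$, this sharpens to $d_{c(vy)}(y,N(v))\le d_1(v)-1$. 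First I would dispose of the degenerate case $\Delta^{mon}(G)=1$: then every $d_i(v)=1$, both sums vanish, and $B(v)=0$.

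When $d_1(v)=\Delta^{mon}(G)\ge 2$, colour $1$ is not a singleton, so writing $t:=|N_!(v)|$ for the number of singleton colours we have $t\le s-1\le\sum_{j\ge 2}d_j(v)=d(v)-d_1(v)$. Keeping only the $i=1$ summand gives $\sum_{i}(d(v)-d_i(v))(d_i(v)-1)\ge(d(v)-d_1(v))(d_1(v)-1)\ge t(d_1(v)-1)$, while the subtracted sum is at most $t(d_1(v)-1)$ by the previous bound; hence $B(v)\ge 0$. This same chain pins down the equality case: if $B(v)=0$ every inequality must be tight. Tightness of $\sum_{i\ge 2}(d(v)-d_i(v))(d_i(v)-1)=0$, combined with $d(v)-d_i(v)\ge d_1(v)\ge 2>0$, forces $d_i(v)=1$ for all $i\ge 2$, so colour $1$ is the only non-singleton colour and $N_!(v)=N(v)\setminus N_1(v)$, which is (a). Tightness of $\sum_{y\in N_!(v)}d_{c(vy)}(y,N(v))=t(d_1(v)-1)$ forces each term to equal $d_1(v)-1$; unwinding $d_{c(vy)}(y,N(v))\le d_{c(vy)}(y)-1\le d_1(v)-1$ then yields $d_{c(vy)}(y)=d_1(v)=\Delta^{mon}(G)$, so $d^{mon}(y)=\Delta^{mon}(G)$ for every $y\in N_!(v)$, which is (b).

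For (c) I would substitute the structure from (a) into the definition (\ref{e:B_i(v)}) of $B_1(v)$: since $d_j(v)=1$ for $j\ge 2$, the leading term $d_1(v)\sum_{j\ge 2}(d_j(v)-1)$ vanishes and $B_1(v)=-\sum_{y\in N_!(v)}d_{c(vy)}(y,N_1(v))$, so $B_1(v)=0$ is equivalent to $d_{c(vy)}(y,N_1(v))=0$ for all $y\in N_!(v)$; that is, no edge from $y\in N_!(v)$ to $N_1(v)$ carries the colour $c(vy)$. Now fix $xy\in E[N_1(v),N_!(v)]$ with $x\in N_1(v)$, $y\in N_!(v)$. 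Then $c(vx)=1\ne c(vy)$ (as $y\notin N_1(v)$) and $c(xy)\ne c(vy)$ by the line above, so the only remaining point is $c(xy)\ne c(vx)=1$, and this is where I expect the real difficulty to lie. The obstacle is that $B_1(v)=0$ is blind to colour-$1$ edges between $N_1(v)$ and $N_!(v)$: one can build an edge-coloured graph meeting $B(v)=B_1(v)=0$ yet carrying a colour-$1$ edge $xy$, so some genuinely extra input is required. I would close this gap with the standing edge-minimality convention (Remark~1), which forbids monochromatic paths on four vertices; since $d_1(v)=\Delta^{mon}(G)\ge 2$ there is a second vertex $a\in N_1(v)\setminus\{x\}$, and if $c(xy)$ were $1$ then $a,v,x,y$ would be a monochromatic $P_4$ in colour $1$, a contradiction. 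Hence $vxy$ is a rainbow triangle and $xy\in RE(v)$, which finishes (c).
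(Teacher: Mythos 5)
Your proof is correct and takes essentially the same route as the paper's: the same key bound $d_{c(vy)}(y,N(v))\le \Delta^{mon}(G)-1$ for $y\in N_!(v)$, the same tightness analysis of the nonnegative pieces to obtain (a) and (b), and for (c) the same reduction of $B_1(v)$ to $-\sum_{y\in N_!(v)}d_{c(vy)}(y,N_1(v))$ followed by the monochromatic path-of-length-3 argument. Your observation that (c) genuinely requires edge-minimality is also on target: the paper's own proof of (c) invokes edge-minimality explicitly even though the lemma statement omits that hypothesis, which is harmless in context only because the lemma is always applied to an edge-minimal counterexample.
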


\noindent{\bf Proof of Lemma \ref{Lem:B(v)}.}
If $\Delta^{mon}(G)=1$, then obviously $B(v)=0$. Suppose that $\Delta^{mon}(G)\geq 2$.
Notice that $d_{c(vy)}(y,N(v))\leq \Delta^{mon}(G)-1=d_1(v)-1$ for $y\in N_!(v)$.
From Ineq (\ref{e:B}), we have
\begin{equation}\label{e:B(v)}
\begin{aligned}
B(v)
&\geq\underbrace{\big(d(v)-d_1(v)-|N_!(v)|\big)
\big(d_1(v)-1\big)}_{\textcircled{1}}+
\underbrace{\sum_{2\leq i\leq s}\big(d(v)-d_i(v)\big)\big(d_i(v)-1\big)}_{\textcircled{2}}\\
&\geq 0.
\end{aligned}
\end{equation}

If $B(v)=0$, then
$d_{c(vy)}(y,N(v))=d_1(v)-1$ for all $y\in N_!(v)$
and both $\textcircled{1}$ and $\textcircled{2}$
in Ineq (\ref{e:B(v)}) equal to 0.
Since $\Delta^{mon}(G)=d^{mon}(v)=d_1(v)\geq 2$,
we have $d(v)-d_1(v)-|N_!(v)|=0$ and $d_i(v)=1$ for $i\in [2,s]$.
Therefore, both (a) and (b) hold.

Note that $d(v)-d^c(v)=d_1(v)-1$.
We have $B_1(v)=-\sum_{y\in N_!(v)}d_{c(vy)}(y,N_1(v))=0$.
Hence, $c(vy)\notin \mathcal{C}(N_1(v),N_!(v))$
for $y\in N_!(v)$.
Since $G$ is edge-minimal and $d_1(v)\geq 2$,
we have $1\notin \mathcal{C}(N_1(v),N_!(v))$; indeed,
if $1\in \mathcal{C}(N_1(v),N_!(v))$, then
there exists a monochromatic path of length 3 as $d_1(v)\geq 2$.
Furthermore, for any edge $xy\in E[N_1(v),N_!(v)]$,
$xy$ is a rainbow triangle edge of $v$.
Hence $E[N_1(v),N_!(v)]\subseteq RE(v)$. This proves (c).
The proof is complete.
$\hfill\blacksquare$

\begin{lem}\label{Lem:Bk}
Let $k\geq 2$ be a positive integer and $G$
be a graph on $n\geq 3k-2$ vertices.
If $\delta(G)\geq \frac{n+k-1}{2}$
then $G$ contains a $B_k$.
\end{lem}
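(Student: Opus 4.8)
The goal is to show that any (uncolored) graph $G$ on $n\geq 3k-2$ vertices with minimum degree $\delta(G)\geq \frac{n+k-1}{2}$ contains a book $B_k$, that is, $k$ triangles sharing a common edge. Equivalently, I must find an edge $uv$ whose two endpoints have at least $k$ common neighbors, i.e. $|N(u)\cap N(v)|\geq k$. So the plan is to first locate an edge, and then bound the size of the common neighborhood of its endpoints from below by a counting argument on the degrees.

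\medskip

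First I would verify that $G$ contains at least one edge and in fact has large minimum degree relative to $n$; since $\delta(G)\geq \frac{n+k-1}{2}>\frac{n}{2}$ (using $k\geq 2$), the graph is far denser than the Mantel threshold, so it certainly contains triangles and we have plenty of edges to work with. The key quantitative step is the standard common-neighbor inequality: for any edge $uv\in E(G)$,
\begin{equation*}
|N(u)\cap N(v)|\geq d(u)+d(v)-n\geq 2\cdot\frac{n+k-1}{2}-n=k-1.
\end{equation*}
This only gives $k-1$ common neighbors, one short of what we need, so the naive bound must be improved. The reason it is off by one is that the inclusion-exclusion estimate $|N(u)\cap N(v)|\geq d(u)+d(v)-|N(u)\cup N(v)|$ uses the crude bound $|N(u)\cup N(v)|\leq n$, whereas $u$ and $v$ themselves may or may not lie in these neighborhoods; a careful accounting of whether $u\in N(v)$, $v\in N(u)$, and the total vertex count should recover the missing unit.

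\medskip

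The main obstacle is therefore to squeeze out this extra common neighbor. I would choose the edge $uv$ cleverly rather than arbitrarily. One natural approach: among all edges, pick one maximizing $d(u)+d(v)$, or pick a vertex $v$ of maximum degree and an appropriate neighbor $u$. Then I would refine the count of $|N(u)\cup N(v)|$. Note $N(u)\cup N(v)\subseteq V(G)$, and both $u,v\in N(u)\cup N(v)$ precisely because $uv$ is an edge (so $v\in N(u)$ and $u\in N(v)$); the real question is whether $N(u)\cup N(v)$ can genuinely equal all of $V(G)$. If $|N(u)\cup N(v)|\leq n-1$, then $|N(u)\cap N(v)|\geq d(u)+d(v)-(n-1)\geq k$ and we are done. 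So the hard case is when $N(u)\cup N(v)=V(G)$ for every edge, meaning every vertex is adjacent to $u$ or to $v$. I would argue that this near-universal-coverage situation, combined with $n\geq 3k-2$ and the degree bound, either forces a vertex of degree close to $n-1$ (which immediately yields a large book through that vertex) or leads to a contradiction via an averaging/pigeonhole argument on the sum of degrees.

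\medskip

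An alternative and probably cleaner route, which I would pursue in parallel, is a direct counting of books. Summing the common-neighbor bound over all edges gives
\begin{equation*}
\sum_{uv\in E(G)}|N(u)\cap N(v)|=\sum_{w\in V(G)}\binom{d(w)}{2}\geq n\binom{\delta}{2},
\end{equation*}
by convexity, while the number of edges is at most $\binom{n}{2}$; comparing these shows the average edge lies in many triangles, so some edge must lie in at least $k$ of them provided $n\geq 3k-2$. I expect the delicate point to be checking that the threshold $n\geq 3k-2$ is exactly what makes the averaging beat the integer rounding (the $-1$ loss above), and this is precisely where Example~1 shows the bound is tight. I would lean on the balanced complete tripartite extremal example as a guide for where equality is approached, ensuring the final inequality is verified in the boundary regime $n=3k-2$ with the minimum degree exactly $\frac{n+k-1}{2}$.
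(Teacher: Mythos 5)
There is a genuine gap: both of your proposed ways to close the ``hard case'' fail, and the one idea that actually finishes the proof is missing. You correctly reduce to the situation where $N(u)\cup N(v)=V(G)$ for every edge $uv$ (equivalently, by integrality, the graph is regular of degree exactly $\frac{n+k-1}{2}$ and every edge has exactly $k-1$ common neighbors). But your first suggested resolution --- that this ``forces a vertex of degree close to $n-1$ or leads to a contradiction via an averaging/pigeonhole argument on the sum of degrees'' --- cannot work as stated: in this case \emph{all} degrees equal $\frac{n+k-1}{2}$, so no vertex of large degree exists, and any averaging over degree sums reproduces exactly the pointwise bound $k-1$ (the complete balanced tripartite example shows every edge can sit in exactly $k-1$ triangles, so there is no slack to average). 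The paper closes this case with a third-vertex trick you do not have: take an edge $uv$ and a common neighbor $w$ (one exists since $k-1\geq 1$). Since $N(u)\cup N(v)=V(G)$ and $N(w)\cup N(v)=V(G)$, the set $V(G)\setminus N(v)$, which has size $n-\frac{n+k-1}{2}=\frac{n-k+1}{2}$, lies inside $N(u)\cap N(w)$. But $uw$ is an edge, so absence of $B_k$ forces $\frac{n-k+1}{2}\leq k-1$, i.e.\ $n\leq 3k-3$, contradicting $n\geq 3k-2$.

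Your alternative counting route contains a concrete error: the identity
$\sum_{uv\in E(G)}|N(u)\cap N(v)|=\sum_{w\in V(G)}\binom{d(w)}{2}$
is false. The left-hand side equals three times the number of triangles, while the right-hand side counts \emph{all} paths of length two, including those whose endpoints are non-adjacent; the true relation $\sum_{w}\binom{d(w)}{2}\geq 3t(G)$ is an upper-bound relation on triangles, which is useless for your purpose. If one repairs the count by summing the valid bound $|N(u)\cap N(v)|\geq d(u)+d(v)-n$ over edges, the average number of triangles per edge comes out to at least $\frac{\sum_w d(w)^2}{e(G)}-n\geq 2\delta-n=k-1$, i.e.\ exactly the pointwise bound again --- so no edge is forced into $k$ triangles, and your check at $n=3k-2$ only appeared to succeed because it used the false identity. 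The averaging strategy is structurally incapable of beating $k-1$ here; some argument exploiting the structure of the regular extremal case (such as the paper's common-neighbor containment above) is required.
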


\noindent{\bf Proof of Lemma \ref{Lem:Bk}.}
Suppose to the contrary that there is no
$B_k$ in $G$.
If there exists a vertex $v\in V(G)$ with $d(v)\geq \frac{n+k}{2}$,
then $|N(v)\cap N(u)|\geq \lceil\frac{n+k}{2}\rceil+
\lceil \frac{n+k-1}{2}\rceil-n\geq k$
where $u\in N(v)$. In this case, there is a $B_k$.
Suppose $d(v)=\frac{n+k-1}{2}$ for all $v\in V(G)$.
Then for any edge $uv\in E(G)$,
we have
$$\frac{n+k-1}{2}+\frac{n+k-1}{2}-n \leq |N(v)\cap N(u)|\leq k-1.$$
Hence $|N(v)\cap N(u)|=k-1\geq 1$ and $N(u)\cup N(v)=n$.
It follows that $V(G)\setminus N(v)\subseteq N(u)$.
Since each vertex in $V(G)$ is of degree $\frac{n+k-1}{2}$,
we have $|V(G)\setminus N(v)|=\frac{n-k+1}{2}$.
Similarly, for $w\in N(v)\cap N(u)$, we also have $V(G)\setminus N(v)\subseteq N(w)$.
Hence, $V(G)\setminus N(v)\subseteq N(u)\cap N(w)$.
Then $|N(u)\cap N(w)|\geq \frac{n-k+1}{2}$.
Since $\frac{n+k-1}{2}$ is an integer,
$k-1\geq \frac{n-k+1}{2}$, and so $n\leq 3k-3$,
a contradiction. This proves Lemma \ref{Lem:Bk}.
$\hfill\blacksquare$

\vspace{3mm}
Now we are ready to prove Theorem \ref{Thm1:Bk}.

\vspace{3mm}
\noindent{\bf Proof of Theorem \ref{Thm1:Bk}.}
We prove the theorem by contradiction.
Let $G$ be a counterexample such that $e(G)$
is as small as possible.
We use $d_i$ instead of $d_i(v)$ in the following.
Let $v$ be a vertex with $d^{mon}(v)=\Delta^{mon}(G)$.
By Lemma \ref{Lem:Bk}, we can assume $\Delta^{mon}(G)\geq 2$.

If there exists an integer $i \in[1, s]$
such that
$rt(v,N_{i}(v))\geq (k-1)d_{i}+1$,
then there exists a vertex $x_{0} \in N_{i}(v)$ satisfying
$r t(v,x_{0}) \geq k$, a contradiction.
Thus, $$rt(v,N_{i}(v))\leq (k-1)d_{i}\leq \sum_{x\in N_{i}(v)}(d^c(x)+d^c(v)-n)$$ (where
the second inequality holds as the condition that $d^c(x)+d^c(v)\geq n+k-1$).
It follows that $B_i(v)\leq 0$
for all $i \in[1, s]$ by Lemma \ref{lem:rt(v,N_i(v))}.
Therefore,
$0\leq B(v) =\sum_{1\leq i\leq s} B_i(v)\leq 0$.
That is, $B(v)=B_i(v)=0$ for $i\in [1,s]$.
By Lemma \ref{Lem:B(v)} we have
$rt(v,N_i(v))= (k-1)d_{i}$ for all $i \in[1, s]$
and   $d_i=1$ for $i\in [2,s]$.
Since $G$ contains no $k$ rainbow triangles sharing
one common edge,
$rt(v,x)=k-1$ for all $x\in N(v)$.

\vspace{2mm}
\noindent{\bf Claim.} There exists a vertex $x\in N_!(v)$
such that $d^c_{G[N[v]]}(x)\leq k$.

\begin{proof}
Choose $x\in N_!(v)$. If all edges in $G[N_!(v)]$ are rainbow edges of $v$,
then as $E[N_1(v),N_!(v)]\subseteq RE(v)$ (recall Lemma \ref{Lem:B(v)}(c)),
$d^c_{G[N[v]]}(x)\leq d_{G[N[v]]}(x)\leq rt(v,x)+1\leq k$
for $x\in N_!(v)$. Now assume that
there exists a vertex $u'\in N_!(v)$ such that $c(xu')=c(vx)$.
Since $x\in N_!(v)$, by (b) of Lemma \ref{Lem:B(v)},
$d^{mon}(x)=\Delta^{mon}(G)$.

By a) of Lemma \ref{Lem:B(v)},
$N_!(x)=N(x)\setminus N_{c(vx)}(x)$ and $c(vx)$
is the maximum monochromatic color of $x$.
For all $u\in N_!(v)\cap N(x)$, we have
$u\in N_{c(xv)}(x)$ or $u\in N_!{x}$.
By c) of Lemma \ref{Lem:B(v)},
$E[N_{c(xv)}(x),N_!(x)]\subseteq RE(x)$.
If $u\in N_!(x)$, then $xvux$ is a rainbow triangle,
and hence $xu$ is a rainbow triangle edge of $v$.
Therefore, for all $u\in N_!(v)\cap N(x)$, we
have $c(xu)=c(vx)$ or $xu\in RE(v)$.
From c) of Lemma \ref{Lem:B(v)}, we have $E[x,N_1(v)]\subseteq RE(v)$.
Hence, $$d^c_{G[N[v]]}(x)\leq d^c_{N_![v]}(x)+|\mathcal{C}(x,N_1(v))|\leq rt(v,x)+1=k.$$
\end{proof}

Then, we infer
$$\begin{aligned}
\frac{n+k-1}{2}\leq d^c(x)&\leq d^c_{G[N[v]]}(x)+n-\big(\Delta^{mon}(G)+d^c(v)\big)\\
&=k+n-\Delta^{mon}(G)-d^c(v)\\
&\leq \frac{n+k+1}{2}-\Delta^{mon}(G),\\
\end{aligned}$$
that is, $\Delta^{mon}(G)= 1$,
a contradiction.
$\hfill\blacksquare$

\subsection{Edge-colored friendship graphs}
The aim of this subsection is to prove Theorem \ref{Thm2:Fk}.

A \emph{matching} of a graph consists of some vertex-disjoint edges.
The \emph{matching number} of a graph $G$ is
the maximum number of pairwise disjoint edges in $G$, denoted by $\alpha'(G)$.
Erd\H{o}s and Gallai \cite{EG59} determined Tur\'{a}n number of a matching with
given size.

A \emph{covering} of a graph $G$ is
a subset $K$ of $V(G)$ such that every edge of
$G$ has at least one end vertex in $K$.
A covering $K^*$ is a \emph{minimum covering} if $G$ has no
covering $K$ with $|K|<|K^*|$.
The number of vertices in a minimum covering of $G$
is called the \emph{covering number} of $G$, and is denoted by $\beta(G)$.

\begin{lem}\label{Lem:beta(G)-com}
For a graph $G$ on $n$ vertices, we have $\beta(G)\leq n-1$.
Furthermore, $G$ is complete if and only if $\beta(G)= n-1$.
\end{lem}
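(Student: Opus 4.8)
The plan is to establish the inequality and the characterization separately, in each case by a short direct argument about which edges a candidate covering fails to cover. The one observation underlying everything is that a set $K\subseteq V(G)$ is a covering if and only if its complement $V(G)\setminus K$ is an independent set: an edge is left uncovered by $K$ precisely when both of its endpoints lie outside $K$. Keeping track of this ``uncovered'' condition is all that is needed.

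For the bound $\beta(G)\leq n-1$, I would simply exhibit a covering of size $n-1$. Fix any vertex $v\in V(G)$ and set $K=V(G)\setminus\{v\}$. Since every edge of $G$ has two distinct endpoints, at least one of them lies in $K$; hence $K$ is a covering with $|K|=n-1$, and therefore $\beta(G)\leq |K|=n-1$.

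For the characterization I would argue each implication by contradiction. If $G$ is complete but $\beta(G)\leq n-2$, then some minimum covering $K$ omits at least two vertices $u,w$; since $G=K_n$, the edge $uw$ is present yet has neither endpoint in $K$, contradicting that $K$ is a covering, so $\beta(G)=n-1$. Conversely, if $G$ is not complete, choose two non-adjacent vertices $u,w$ and take $K=V(G)\setminus\{u,w\}$; the only edge that could escape $K$ would join $u$ and $w$, but $uw\notin E(G)$, so $K$ is a covering of size $n-2$, whence $\beta(G)\leq n-2<n-1$.

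I do not expect any substantive obstacle here, as the entire statement rests on the single dictionary between coverings and independent sets. The only point deserving a moment's care is ensuring, in each direction of the equivalence, that the two omitted vertices genuinely witness either a present edge (in the complete case) or an absent edge (in the non-complete case); once that is pinned down, both implications close immediately.
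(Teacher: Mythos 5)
Your proof is correct and takes essentially the same route as the paper: the decisive step in both is that two non-adjacent vertices $u,w$ of a non-complete graph make $V(G)\setminus\{u,w\}$ a covering of size $n-2$, forcing $\beta(G)\leq n-2$. You simply spell out the parts the paper dismisses as ``clear'' (the bound $\beta(G)\leq n-1$ and the direction that completeness forces $\beta(G)=n-1$), which is fine.
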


\noindent{\bf Proof of Lemma \ref{Lem:beta(G)-com}.}
The sufficiency
of condition  is clear. We establish
its necessity by the contradiction.
Suppose that $G$ is not complete.
Then there exist two vertices $u,v\in V(G)$ such that
$uv\notin E(G)$. Hence $V(G)\setminus\{u,v\}$ is a covering of $G$.
Thus, $\beta(G)\leq n-2$, a contradiction.
$\hfill\blacksquare$

\vspace{4mm}
Now we will prove a useful bound on $\alpha'(G)$ and
$\beta(G)$, which is partly inspirited by the
proof of Tur\'an numbers of matchings by
Erd\H{o}s and Gallai \cite{EG59}. Following Berge \cite[pp.~176]{B58} (see
also Erd\H{o}s and Gallai \cite[pp.~354]{EG59}),
for a graph $G$ with $\alpha'(G)=k$ and $n>2k$,
choose $k$ independent edges
and call them $\alpha'$-edges and the remaining edges $\gamma$-edges.
Add a vertex $x$ to $V(G)$ and connect $x$ with
every vertex which is not incident with $\alpha'$-edges
(i.e., any vertex not in the edges of the matching of size $k$) by an edge.
For all the new edges (i.e., the edges we add) and the old $\alpha'$-edges
(i.e., the matching), we call them $\alpha$-edges.
A path is called \emph{alternating} if
its edges are alternately $\alpha$-edges and $\gamma$-edges.
For a vertex, we say that it is a $\gamma$-vertex if it
is reachable from $x$ by an alternating path which only ends with $\gamma$-edges.
Apparently, $x$ is a $\gamma$-vertex.

Let $V_0$ be the set of all $\gamma$-vertices of $G$,
and $V_1,\cdots,V_p$ be the components of $G$ obtained from $G$ by deleting $V_0$.
There is a well-known fact (\cite[pp.~169-170]{B50,B58},
\cite[pp.~141-142]{G50})
on the relationship among
$\alpha$-edges, $\gamma$-vertices and $V_i$.

\begin{fact}\label{Fact:alpha-edge}
Each $\alpha$-edge incident to a $\gamma$-vertex
is also incident to some odd $V_i$
(i.e., $|V_i|$ is odd)
and
each odd $V_i$ has exactly one $\alpha$-edge incident to a $\gamma$-vertex.
\end{fact}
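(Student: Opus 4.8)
The plan is to reconstruct the classical alternating-tree structure behind this construction and read the claim off from it. First I would fix the chosen matching $M$ (the $\alpha'$-edges, $|M|=k$) and record that the $\alpha$-edges are precisely $M$ together with the edges joining $x$ to the $M$-unsaturated vertices. Call a vertex of $G$ \emph{outer} if it is reachable from $x$ by an alternating path whose last edge is an $\alpha$-edge; then a $\gamma$-vertex is exactly a vertex reachable from $x$ but \emph{not} outer (every alternating $x$-path to it ends in a $\gamma$-edge), together with $x$. Two facts will be used repeatedly: every $M$-unsaturated vertex is outer, via the single $\alpha$-edge from $x$, so no $\gamma$-vertex other than $x$ is unsaturated; and, by maximality of $M$, there is no $M$-augmenting path, i.e. no $M$-alternating path in $G$ between two distinct $M$-unsaturated vertices.

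The first structural step is to locate the matching partner of a $\gamma$-vertex. If $w\neq x$ is a $\gamma$-vertex and $ww'\in M$, I would take an alternating $x$-path $P$ ending in a $\gamma$-edge at $w$ and extend it by the $\alpha$-edge $ww'$. The point is that $P'=P+ww'$ is again a simple alternating path: the only $\alpha$-edge at $w'$ is $ww'$ itself, since $w'$ is saturated and hence not joined to $x$, so if $w'$ already lay on $P$ it would force $P$ to traverse $ww'$, which it does not, as $P$ reaches the endpoint $w$ along a $\gamma$-edge. Hence $w'$ is outer, so \emph{not} a $\gamma$-vertex. This shows that no $\alpha$-edge joins two $\gamma$-vertices: matching edges are handled by the previous sentence, while the $\alpha$-edges at $x$ run to unsaturated (hence outer) vertices. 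Consequently every $\alpha$-edge incident to a $\gamma$-vertex has its other endpoint outside $V_0$, that is, in some component $V_i$.

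Next I would do the parity bookkeeping inside a fixed component $V_i$ of $G-V_0$. Each vertex of $V_i$ is of exactly one of three kinds: $M$-unsaturated, matched by $M$ to a $\gamma$-vertex, or matched by $M$ within $V_i$ (a matching edge with an endpoint outside $V_0$ cannot leave $V_i$, since $G-V_0$ has no edges between distinct components). Writing $S_i$ for the vertices of the first two kinds, $M$ restricted to $V_i$ perfectly matches $V_i\setminus S_i$, so $|V_i|\equiv|S_i|\pmod 2$; moreover the $\alpha$-edges incident to a $\gamma$-vertex that meet $V_i$ are in bijection with $S_i$ (the edge $xu$ for an unsaturated $u\in S_i$, and the matching edge $zw$ for $z\in S_i$ matched to a $\gamma$-vertex $w$). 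Thus the whole statement reduces to the single inequality $|S_i|\le 1$: granting it, $V_i$ is odd exactly when $|S_i|=1$, which is exactly when $V_i$ receives precisely one $\alpha$-edge from a $\gamma$-vertex, and $V_i$ is even (receiving none) otherwise.

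The main obstacle is precisely $|S_i|\le 1$, which says each component of $G-V_0$ carries at most one matching defect. Every vertex of $S_i$ is outer (unsaturated vertices directly, and a vertex matched to a $\gamma$-vertex by the extension argument above), so I would attempt to derive a contradiction from two distinct $p,q\in S_i$ by building an $M$-augmenting path: combine alternating $x$-paths ending at $p$ and at $q$ with a connecting path inside $V_i$, and extract from their symmetric difference an $M$-alternating path between two unsaturated vertices. I expect the delicate part to be the non-bipartite phenomenon: a vertex of $V_i$ may be reachable by alternating paths of both parities (a blossom), so the naive alternating forest need not be a tree and the two paths may overlap. This is also why the definition of $\gamma$-vertex must be read as ``every alternating $x$-path to the vertex ends in a $\gamma$-edge'' rather than ``some alternating $x$-path does,'' since under the latter reading a single odd cycle already violates the Fact. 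Handling this blossom case is the heart of the classical Berge--Gallai analysis, and I would finish either by reproducing that blossom-aware alternating-path argument or by invoking the Gallai--Edmonds structure theorem directly.
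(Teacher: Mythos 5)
Your reconstruction is largely sound, and you should first know that the paper does not prove this Fact at all: it is quoted as classical, with pointers to Belck, Berge and Gallai, so there is no internal proof to compare against. Your clarification of how ``$\gamma$-vertex'' must be read (reachable from $x$ but \emph{not} reachable by an alternating path ending in an $\alpha$-edge) is correct and genuinely needed: in a triangle $abc$ with matching $\{ab\}$ and $x$ joined to $c$, the ``some path ends in a $\gamma$-edge'' reading would make $a,b$ both $\gamma$-vertices, and the $\alpha$-edge $ab$ would then meet no component $V_i$, contradicting the Fact. Your first step (the matching partner of a $\gamma$-vertex is outer, hence no $\alpha$-edge joins two $\gamma$-vertices) is complete and correct, as is the parity bookkeeping: the $\alpha$-edges from $\gamma$-vertices into a component $V_i$ are in bijection with your set $S_i$, one has $|V_i|\equiv |S_i| \pmod 2$, and the Fact is exactly equivalent to $|S_i|\le 1$ for every $i$.

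The genuine gap is that $|S_i|\le 1$ --- the one statement carrying all of the matching-theoretic content, since it encodes the factor-critical structure of the components --- is never proved. You offer two exits, and neither is a proof as written. ``Reproducing the blossom-aware alternating-path argument'' is a promise, not an argument; the blossom difficulty you correctly identify is precisely what has to be overcome there. And ``invoking the Gallai--Edmonds structure theorem directly'' is not direct: Gallai--Edmonds is stated for $D(G)$ (vertices missed by some maximum matching), $A(G)=N(D(G))\setminus D(G)$, and the components of $G-A(G)$, whereas your $V_0$ and $V_i$ are defined via alternating-path reachability. To apply the theorem you must first prove the bridge that the outer vertices are exactly $D(G)$ (one direction via the matching $M\triangle (P-x)$, the other via the symmetric difference of two maximum matchings), and hence $V_0=A(G)$; this identification is real work of the same nature as the step being skipped, and your attempt does not mention that it is needed. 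So as a self-contained proof the attempt stops exactly where the substance begins; as a citation-based proof it would be acceptable, but then it does no more than the paper itself, which discharges this Fact onto Berge and Gallai.
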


It follows that each $\alpha
$-edge is either incident to
a $\gamma$-vertex and an odd $V_i$,
or is an edge of some $G[V_i]$
(see Figure \ref{fig1}, the edges colored blue are $\alpha'$-edges).
\begin{figure}[htbp!]
  \centering
\scalebox{1}{\includegraphics[width=3.9in,height=2.0in]{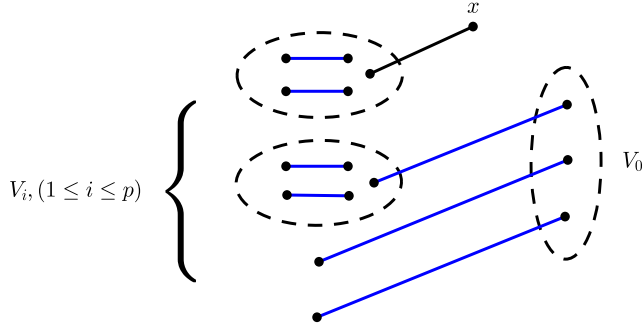}}\\
\captionsetup{font={scriptsize}}
\caption{A partition $(V_0,V_1,\cdots,V_p)$ of a graph $G$}\label{fig2}
\end{figure}
Furthermore, we can assert that $G[V_i]$ contains exactly
$\lfloor \frac{|V_i|}{2}\rfloor$ $\alpha'$-edges.
If $\alpha'(G)<\lfloor \frac{|V_i|}{2}\rfloor$,
then there exist at least two vertices  in $V_i$
incident to $x$, a contradiction to Fact \ref{Fact:alpha-edge}.
Note that each vertex in $V_0$ is incident with an $\alpha'$-edge;
otherwise the vertex is connected with $x$ by an $\alpha$-edge, a contradiction.
The following equation on $\alpha'(G)$
can be deduced implicitly from  Erd\H{o}s and Gallai \cite[pp.~355]{EG59}.

\begin{equation}\label{e:alpha'}
\alpha'(G)= |V_0|+\sum_{1\leq i\leq p}\left\lfloor\frac{|V_i|}{2}\right\rfloor.
\end{equation}

Since $\beta(V_i)\leq |V_i|-1$ for $1\leq i\leq p$,
by Lemma \ref{Lem:beta(G)-com},
we have the following lemma.

\begin{lem}\label{Lem:alpha}
Let $G$ be a graph on $n>2\alpha'(G)$ vertices and $\{V_i:0\leq i\leq p\}$ be the partition
as remarked above.
Then
\begin{equation}\label{e:beta}
\beta(G)\leq |V_0|+\sum_{1\leq i\leq p}(|V_i|-1)=n-p\leq 2\alpha'(G)-|V_0|.
\end{equation}
\end{lem}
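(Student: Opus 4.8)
The plan is to establish the two inequalities in Lemma~\ref{Lem:alpha} separately, reading off both from the partition $(V_0,V_1,\ldots,V_p)$ that was set up in the preceding paragraph. The statement bundles three quantities: a covering bound $\beta(G)\leq |V_0|+\sum_{1\leq i\leq p}(|V_i|-1)$, the identity $|V_0|+\sum_{1\leq i\leq p}(|V_i|-1)=n-p$, and the final bound $n-p\leq 2\alpha'(G)-|V_0|$. The middle equality is immediate: since the $V_i$ partition $V(G)$, we have $|V_0|+\sum_{1\leq i\leq p}|V_i|=n$, so subtracting $1$ from each of the $p$ terms in the second sum yields exactly $n-p$.

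For the first inequality I would produce an explicit covering of size $|V_0|+\sum_{1\leq i\leq p}(|V_i|-1)$. By Fact~\ref{Fact:alpha-edge} every $\alpha$-edge touching a $\gamma$-vertex runs from $V_0$ into some odd $V_i$, and crucially there are no $\gamma$-edges between distinct $V_i$'s (this is the defining property of the components obtained after deleting $V_0$). Hence every edge of $G$ is either incident to a vertex of $V_0$, or lies inside a single $G[V_i]$. Take all of $V_0$ into the cover, and for each $i$ take a minimum cover of $G[V_i]$, which by Lemma~\ref{Lem:beta(G)-com} has size $\beta(V_i)\leq |V_i|-1$. The union covers every edge of $G$, so $\beta(G)\leq |V_0|+\sum_{1\leq i\leq p}\beta(V_i)\leq |V_0|+\sum_{1\leq i\leq p}(|V_i|-1)$, as desired.

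For the last inequality I would invoke the matching identity~(\ref{e:alpha'}), namely $\alpha'(G)=|V_0|+\sum_{1\leq i\leq p}\lfloor|V_i|/2\rfloor$, together with the observation (also recorded in the surrounding text) that each $V_i$ with $1\leq i\leq p$ is \emph{odd}, so that $|V_i|=2\lfloor|V_i|/2\rfloor+1$ and in particular $|V_i|\geq 1$. Summing gives $\sum_{1\leq i\leq p}|V_i|=2\sum_{1\leq i\leq p}\lfloor|V_i|/2\rfloor+p$, whence $n-p=|V_0|+\sum_{1\leq i\leq p}|V_i|-p=|V_0|+2\sum_{1\leq i\leq p}\lfloor|V_i|/2\rfloor=2\alpha'(G)-|V_0|$. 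Thus the final relation is in fact an equality under the oddness of all the $V_i$; if some $V_i$ were permitted to be even the inequality $n-p\leq 2\alpha'(G)-|V_0|$ would still follow, since dropping a unit from an even $|V_i|$ only decreases $n-p$ relative to $2\lfloor|V_i|/2\rfloor$.

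The main obstacle, and the point where I would be most careful, is justifying that \emph{all} components $V_1,\ldots,V_p$ are odd and that no edge of $G$ crosses between two distinct $V_i$ without passing through $V_0$. These are exactly the structural facts coming from the Berge--Erd\H{o}s--Gallai alternating-path machinery, and I would either cite Fact~\ref{Fact:alpha-edge} and the cited references directly or reprove the no-crossing-edge property: if an edge joined $V_i$ to $V_j$ it would be a $\gamma$-edge extending an alternating path, forcing one of its endpoints into $V_0$, a contradiction. Once these structural inputs are granted, the three computations above are routine, and the lemma follows by chaining them together.
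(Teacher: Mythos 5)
Your proposal is correct and takes essentially the same route as the paper's (one-line) proof: cover $G$ by $V_0$ together with a $(|V_i|-1)$-vertex cover of each component $G[V_i]$ (using Lemma \ref{Lem:beta(G)-com} and the fact that every edge either meets $V_0$ or lies inside one component), note the partition identity giving $n-p$, and finish with the matching identity (\ref{e:alpha'}). One caution: your side claim that every $V_i$ is odd is neither recorded in the paper nor true in general (an even, perfectly matched component can survive the deletion of $V_0$), but this is harmless here because your hedge correctly handles even components, and the lemma only asserts the inequality $|V_i|-1\leq 2\lfloor |V_i|/2\rfloor$ needs, which holds in both parities.
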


\begin{lem}\label{Lem:beta(G)}
Let $G$ be a graph on $n\geq 2\alpha'(G)+2$ vertices
and $\{V_i:0\leq i\leq p\}$ be the partition
as remarked above.
Then $\beta(G)\leq 2\alpha'(G)-1$
and $V_0$ is not empty.
Furthermore,
if $\beta(G)=2\alpha'(G)-1$,
then

(1) $V_i$ is complete and odd for $1\leq i\leq p$;

(2) $\beta(G)=n-p$
and the covering of $G$ consists of all vertices of $V_0$ and $|V_i|-1$ vertices of $V_i$ for $1\leq i\leq p$.
\end{lem}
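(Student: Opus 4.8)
The plan is to derive both assertions entirely from the two structural facts already established, namely the matching identity (\ref{e:alpha'}) and the covering bound (\ref{e:beta}) of Lemma \ref{Lem:alpha}, combined with the equality characterization in Lemma \ref{Lem:beta(G)-com}. For the first sentence, note that (\ref{e:beta}) already gives $\beta(G)\le n-p\le 2\alpha'(G)-|V_0|$, so the bound $\beta(G)\le 2\alpha'(G)-1$ is immediate as soon as $V_0\neq\emptyset$. To see that $V_0$ is nonempty I would use the alternating-path construction: since $n\ge 2\alpha'(G)+2>2\alpha'(G)$, the fixed maximum matching leaves uncovered vertices, so $x$ has neighbours in $V(G)$; following a $\gamma$-edge out of such an uncovered vertex exhibits a vertex reachable from $x$ by an alternating path ending in a $\gamma$-edge, i.e.\ an element of $V_0$. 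Together with (\ref{e:beta}) this settles both claims of the first sentence.

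For the equality case I would simply record which inequalities in the chain
\[
\beta(G)\;\le\;|V_0|+\sum_{1\le i\le p}(|V_i|-1)\;=\;n-p\;\le\;2\alpha'(G)-|V_0|\;\le\;2\alpha'(G)-1
\]
must be tight when $\beta(G)=2\alpha'(G)-1$. Since $|V_0|\ge 1$, the last step is tight only for $|V_0|=1$. The middle step is an equality precisely when there are no even components among $V_1,\dots,V_p$: indeed the gap $\bigl(2\alpha'(G)-|V_0|\bigr)-(n-p)$ computes, via (\ref{e:alpha'}), to exactly the number of $V_i$ with $|V_i|$ even. Hence in the equality case every $V_i$ $(1\le i\le p)$ is odd, which is the oddness asserted in (1), and $\beta(G)=n-p$ is read off from the first equality, giving the displayed value in (2).

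It remains to get completeness and the shape of the cover, which come from the first step of the chain. That step is the bound obtained by taking $V_0$ together with a minimum cover $K_i$ of each $G[V_i]$; this is a genuine cover of $G$ because the only edges leaving a $V_i$ go to $V_0$ (the $V_i$ are distinct components of $G-V_0$), and $|K_i|=\beta(G[V_i])\le |V_i|-1$ by Lemma \ref{Lem:beta(G)-com}. Equality $\beta(G)=n-p$ therefore forces $\beta(G[V_i])=|V_i|-1$ for every $i$, so by the equality half of Lemma \ref{Lem:beta(G)-com} each $G[V_i]$ is complete, finishing (1); the same cover $V_0\cup\bigcup_i K_i$ then consists of all of $V_0$ and exactly $|V_i|-1$ vertices of each $V_i$, which is (2).

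The main obstacle is the nonemptiness of $V_0$. The covering construction and the equality bookkeeping are routine once (\ref{e:beta}) and Lemma \ref{Lem:beta(G)-com} are available, but showing $V_0\neq\emptyset$ is where the hypothesis $n\ge 2\alpha'(G)+2$ must genuinely be exploited through the alternating structure rather than by mere counting: an uncovered vertex only yields an element of $V_0$ when it has an incident $\gamma$-edge, so this step must use the way the uncovered vertices are linked into $G$ (via Fact \ref{Fact:alpha-edge}) and is the delicate point of the argument.
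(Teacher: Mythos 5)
Your overall route is the same as the paper's: both rest on the chain of inequalities coming from Lemma \ref{Lem:alpha}, with the equality half of Lemma \ref{Lem:beta(G)-com} converting tightness of the first step into completeness of each $G[V_i]$. Where you differ is in level of detail, and here your write-up is actually \emph{more} complete than the paper's: the paper's proof of the equality case only states that all inequalities in (\ref{e:beta}) become equalities and deduces $\beta(G[V_i])=|V_i|-1$, hence completeness; it never explicitly verifies the oddness claim in (1) or part (2). Your observation that $\bigl(2\alpha'(G)-|V_0|\bigr)-(n-p)$ equals, via (\ref{e:alpha'}), exactly the number of even components, and your identification of the cover $V_0\cup\bigcup_i K_i$, supply precisely those missing pieces, and your computations are correct.

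The step you leave open --- nonemptiness of $V_0$ --- is, as you suspected, the genuinely delicate point, but not for a reason Fact \ref{Fact:alpha-edge} can repair. An uncovered vertex has no incident $\alpha'$-edge, so \emph{every} edge of $G$ at it is a $\gamma$-edge; what your argument really needs is that some uncovered vertex is not isolated, and this cannot be extracted from $n\geq 2\alpha'(G)+2$ alone. Indeed the lemma as literally stated is false: for $G=K_2\cup 2K_1$ we have $n=4\geq 2\alpha'(G)+2$, yet both uncovered vertices are isolated, $V_0=\emptyset$, and the component $K_2$ is even, violating both the nonemptiness claim and (1). The paper's own proof trips on the same example (its assertion that $V_0=\emptyset$ forces exactly $\lfloor n/2\rfloor$ $\alpha'$-edges fails there). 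Everything is rescued by the hypothesis, implicit in the paper and automatic in the only place the lemma is applied, that $G$ has no isolated vertices: $G_{\triangle}(v)$ is the subgraph induced by the edge set $RE(v)$. With that hypothesis your argument closes in one line: since $n>2\alpha'(G)$ there is an uncovered vertex $u$; it has a neighbour $w$ in $G$; the edge $uw$ is not an $\alpha'$-edge, hence a $\gamma$-edge; and $xuw$ is an alternating path ending in a $\gamma$-edge, so $w\in V_0$. So your proof is sound and essentially the paper's, provided you make the no-isolated-vertices assumption explicit; without it, no proof exists because the statement is false.
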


\noindent{\bf Proof of Lemma \ref{Lem:beta(G)}.}
Since $n\geq 2\alpha'(G)+2$,
$G$ is not complete.
If $V_0=\emptyset$,
then $G$ contains exactly
$\lfloor \frac{n}{2}\rfloor$ $\alpha'$-edges,
which implies that $n\leq 2\alpha'(G)+1$, a contradiction.
Hence $|V_0|\geq 1$.
$\beta(G)\leq 2\alpha'(G)-1$ follows from Ineq (\ref{e:beta}).

If $\beta(G)=2\alpha'(G)-1$,
then all inequalities become equalities in Ineq (\ref{e:beta}).
Hence $\beta(V_i)=|V_i|-1$ for all $1\leq i\leq p$.
From Lemma \ref{Lem:beta(G)-com}, $G[V_i]$ is complete.
$\hfill\blacksquare$

\vspace{4mm}
Recall $RE(v)$ is the edge set of rainbow triangle edges of $v$.
Let $G_{\triangle}(v)$ denote the subgraph induced by the edge set $RE(v)$.
Let $C_{\triangle}(v)$ denote a minimum covering of $G_{\triangle}(v)$.
Thus $\beta(G_{\triangle}(v))=|C_{\triangle}(v)|$.
Now we are ready to prove Theorem \ref{Thm2:Fk}.

\vspace{4mm}
\noindent{\bf Proof of Theorem \ref{Thm2:Fk}.}
We prove the theorem by contradiction.
Let $G$ be a counterexample with the smallest order $n$,
and then $e(G)$ is as small as possible.
Choose $v\in V(G)$ such that $d^{mon}(v)=\Delta^{mon}(G)$.
Then $\alpha'(G_{\triangle}(v))\leq k-1$.
Since $n\geq 2k+9$,
by Corollary \ref{Cor:rt(v)} and Lemma \ref{Lem:B(v)}
we have $e(G_{\triangle}(v))=rt(v)\geq \frac{4k^2-9}{2}>\binom{2k-1}{2}$.
Hence $|V(G_{\triangle}(v))|\geq 2k\geq 2\alpha'(G_{\triangle}(v))+2$.
Then $\beta(G_{\triangle}(v))\leq 2k-3$
by Lemma \ref{Lem:beta(G)}.

Let $\alpha$ be a color satisfying $d^{mon}(v)=d_{\alpha}(v)$.
Now we define two disjoint vertex subsets of $N(v)$.
If $\Delta^{mon}(G)\geq2$,
let $U(v)$ be a maximum rainbow neighborhood of $v$
 in $N(v)\setminus (N_{\alpha}(v)\cup C_{\triangle}(v))$ and
$W(v)= N_{\alpha}(v)\setminus C_{\triangle}(v)$.
If $\Delta^{mon}(G)=1$,
let $U(v)$ be a maximum rainbow neighborhood of $v$
 in $N(v)\setminus C_{\triangle}(v)$ and
$W(v)= \emptyset$.
Set $X(v)=U(v)\cup W(v)$, $T(v)=X(v)\cup C_{\triangle}(v)$ and $T[v]=T(v)\cup \{v\}$.
Thus, $X(v)=(N(v)\cup N_{\alpha}(v))\setminus C_{\triangle}(v)=N(v)\setminus C_{\triangle}(v)$.
Apparently, $|U(v)|\geq d^c(v)-\beta(G_{\triangle}(v))-1$ and
\begin{equation}\label{e:T[v]}
|T[v]|\geq |N_{\alpha}(v)|+d^c(v)-1+1 \geq d^{mon}(v)+d^c(v).
\end{equation}
Define $q(u)=|\mathcal{C}(u,C_{\triangle}(v))\setminus \mathcal{C}(u,X[v])|$ for $u\in X(v)$.

According to the definition of $C_{\triangle}(v)$,
there is no rainbow triangle edge of $v$ in $G[\{v\}\cup X(v)]$.
For $u_1,u_2\in X(v)$, if $u_1u_2\in E(G)$,
then we have
\begin{equation}\label{c(u_1u_2)}
c(u_1u_2)\in \{c(vu_1),c(vu_2)\}.
\end{equation}

Let $Y\subseteq X(v)$
and $D[Y]$ be the oriented graph of $G[Y]$ defined as follows:
let $V(D)=Y$ and
orient each edge $u_1u_2$ in $G[Y]$ by $\overrightarrow{u_1u_2}$
if $c(u_1u_2)=c(vu_2)$.
Then for $u\in Y$, all in-arcs from $u$ are assigned the color $c(vu)$,
and all out-arcs from $u$ are assigned pairwise distinct colors
which are different from $c(vu)$ as $G$ is edge-minimal.
Hence $d^{mon}_{G[Y]}(u)=d^-_{D[Y]}(u)$ and
$d^{c}_{G[Y]}(u)=d^+_{D[Y]}(u)$.
Since $d^-_{D[Y]}(u)\leq \Delta^{mon}(G)-1$
for any $u\in Y$,
there exists a vertex $u'\in Y$
such that
\begin{equation}\label{e:*}
d^{c}_{G[Y]}(u')=d^+_{D[Y]}(u')\leq \Delta^{mon}(G)-1.
\end{equation}

In the following, we prove a useful claim on a vertex $v\in V(G)$
with $d^{mon}(v)=\Delta^{mon}(G)$.

\vspace{2mm}
\noindent{\bf Claim.} $q(u)=\beta(G_{\triangle}(v))=2k-3$ for all $u\in X(v)$.

\begin{proof}
Apparently, we have
 \begin{equation}\label{e:d^c(u)}
\begin{aligned}
d^c(u)&\leq d^c_{T[v]}(u)+n-\big|T[v]\big|.
\end{aligned}
\end{equation}

If $\Delta^{mon}(G)=1$, then $G$ is properly-colored.
Therefore, $G[X(v)]$ consists of isolated vertices by (\ref{c(u_1u_2)}).
Thus, for $u\in X(v)$,
$d^c_{T[v]}(u)=|c(uv)\cup \mathcal{C}(\{u\},C_{\triangle}(v))|\leq 1+q(u)$.
Hence by Ineqs (\ref{e:T[v]}) and (\ref{e:d^c(u)}),
we have
$d^c(u)+d^c(v)-n\leq q(u)\leq \beta(G_{\triangle}(v))\leq 2k-3.$
Now we have $q(u)=\big|C_{\triangle}(v)\big|=2k-3$ for all $u\in X(v)$.

Assume that $\Delta^{mon}(G)\geq 2$.
Then for $w \in W(v)$ and $u \in U(v)$,
$c(wu)=c(vu)$ if $wu\in E(G)$
(otherwise, there is a monochromatic
$P_3$, a contradiction).
Thus for $u\in U(v)$,
we have $d^c_{T[v]}(u)\leq d^+_{D[U(v)]}(u)+1+q(u)$.
By Ineqs (\ref{e:T[v]}) and (\ref{e:d^c(u)}),
we have
\begin{equation}\label{e:d^+(u)}
\begin{aligned}
d^{+}_{D[U(v)]}(u)&\geq d^c(u)+\big|T[v]\big|-n-q(u)-1\\
&\geq \Delta^{mon}(G)+2k-4-q(u).
\end{aligned}
\end{equation}

Since $q(u)\leq \beta(G_{\triangle}(v))\leq 2k-3$, we have $d^{+}_{D[U(v)]}(u)\geq \Delta^{mon}(G)-1$.
Since $d^-_{D[U(v)]}(u)\leq \Delta^{mon}(G)-1$,
$d^{+}_{D[U(v)]}(u)=d^{-}_{D[U(v)]}(u)=\Delta^{mon}(G)-1$,
which gives that $q(u)=2k-3$ for all $u\in U(v)$
and all inequalities in Ineq (\ref{e:d^+(u)}) become equalities.

Meanwhile, we have $E[W(v),U(v)]=\emptyset$
as $c(wu)=c(vu)$ if $wu\in E(G)$.
Hence for $w\in W(v)$,
\begin{equation}\label{e:d^c(w)}
\begin{aligned}
\frac{n+2k-3}{2}\leq d^c(w)&\leq d^c_{T[v]}(w)+n-\big|T[v]\big|\\
&\leq \Delta^{mon}(G)+q(w)+n-\Delta^{mon}(G)-d^c(v)\\
&\leq\frac{n-2k+3}{2}+q(w).
\end{aligned}
\end{equation}
Then $q(w)=2k-3$ for all $w\in W(v)$.
\end{proof}

Let $\{V_i:0\leq i\leq p\}$ be the partition of $G_{\triangle}(v)$
as remarked in Lemma \ref{Lem:alpha}.
Since $G_{\triangle}(v)$  is non-complete, by Claim,
$2k-3=\beta(G_{\triangle}(v))\leq 2\alpha'(G_{\triangle}(v))-|V_0|\leq 2k-3$,
which guarantees that $\alpha'(G_{\triangle}(v))=k-1$
and $|V_0|=1$. Since $|V(G_{\triangle}(v))|\geq 2k$, by Lemma \ref{Lem:alpha},
we have $$p\geq |G_{\triangle}(v)|+|V_0|-2\alpha'(G_{\triangle}(v))\geq 2k+1-2(k-1)=3.$$

We first consider the case of $V_0\subsetneq C_{\triangle}(v)$.
By Lemma \ref{Lem:beta(G)}(2),
for $x\in V_i\cap C_{\triangle}(v),1\leq i\leq p$
there exists exactly one vertex $u'\in X(v)$
such that $xu'\in RE(v)$, as $|V_i\setminus C_{\triangle}(v)|=1$.
For any $u\in X(v)\setminus V(G_{\triangle})$ and $x\in C_{\triangle}(v)$,
the fact $xu\notin RE(v)$ gives that $c(xu)\in \{c(vu),c(vx)\}$.
Since $q(u)=|\mathcal{C}(u,C_{\triangle}(v))\setminus \mathcal{C}(u,X[v])|=|C_{\triangle}(v)|=\beta_{\triangle}$,
$c(xu)=c(vx)$.
Since $q(u)=|C_{\triangle}(v)|=\beta_{\triangle}$, by Claim,
each vertex in $X(v)$ is incident to all vertices in $C_{\triangle}(v)$.
For $x\in V_i\cap C_{\triangle}(v)$, there is only one vertex $u'\in X(v)$ such that $c(xu')\neq c(vx)$.
Hence $d^{mon}(x)\geq d_{c(vx)}(x)\geq |X(v)\cup \{v\}-\{u'\}|$.
Since $|X(v)\cup \{v\}|=|T[v]|-\beta_{\triangle}$,
we have the following inequality:
$$\begin{aligned}
d^{mon}(x)&\geq |(X(v)\cup \{v\})-\{u'\}|\\
&\geq d^c(v)+\Delta^{mon}(G)-\beta(G_{\triangle}(v))-1\\
&\geq \frac{n+2k-3}{2}+\Delta^{mon}(G)-(2k-3)-1\\
&\geq \Delta^{mon}(G)+1
\end{aligned}$$
as $n\geq 2k+1$. This is a contradiction.

Next we consider the case that $C_{\triangle}(v)=V_0$.
The goal of the coming part is to prove that $d^{mon}(u)=\Delta^{mon}(G)$
for any $u\in X(v)$.

Suppose that there exists a vertex $u'\in X(v)$ with
$d^c_{G[X(v)]}(u')\leq \Delta^{mon}(G)-2$. Recall that
$T(v)=X(v)\cup C_{\triangle}(v)$, and $X(v)$ and $C_{\triangle}(v)$
are disjoint. Then by Claim and (\ref{e:T[v]}), we have
$$\begin{aligned}
&\frac{n+2k-3}{2}\leq d^c(u')\\
&\leq d^c_{G[X(v)]}(u')+d^c_{G[\{u'\}\cup C_{\triangle}(v)]}(u')+n-|T[v]|+|\{v\}|\\
&=d^c_{G[X(v)]}(u')+q(u')+n-|T(v)|\\
&\leq \Delta^{mon}(G)-2+2k-3+n-(d^{mon}(v)+d^c(v)-1)\\
&\leq 2k-4+n-\frac{n+2k-3}{2},
\end{aligned}$$
a contradiction. Thus, for any vertex $u\in X(v)$, we have
$d^c_{G[X(v)]}(u)\geq\Delta^{mon}(G)-1$. Set $Y=X(v)$. Recall the rule of
the construction of $D[Y]$ (above the proof of Claim). We have $d^-_{D[Y]}(u)\leq \Delta^{mon}(G)-1$
for any $u\in Y$. Since $\sum_{u\in Y}d^-_{D[Y]}(u)=\sum_{u\in Y}d^+_{D[Y]}(u)$,
each vertex $u\in X(v)$ satisfies that
$d^c_{G[X(v)]}(u)= \Delta^{mon}(G)-1$.
Hence $d^{mon}(u)= d^-_{D[X(v)]}(u)+1=\Delta^{mon}(G)$.

Since $C_{\triangle}(v)=V_0$ and $|V_0|=1$,
we have $q(u)=2k-3=1$, and hence $k=2$.
Note that $|G_{\triangle}(v)-C_{\triangle}(v)|=\sum_{i=1}^p|V_i|\geq p\geq 3$.
To avoid the graph consisting of two rainbow
triangles sharing one common vertex in $G$,
we have $|RE(u)|\leq 1$
for $u\in G_{\triangle}(v)-C_{\triangle}(v)$.
Since $u$ is also a vertex with $d^{mon}(u)=\Delta^{mon}(G)$, we have $|RE(u)|\geq 2$, a contradiction.
$\hfill\blacksquare$

\section{Concluding remarks}\label{Sec:5}
One may wonder the sharpness of Theorems \ref{Thm1:Bk}
and \ref{Thm2:Fk}. For Theorem \ref{Thm1:Bk},
by Example 1, we know when $k\geq \frac{n}{3}+1$ (in
this case, the subgraph $B_k$ is with order $\Theta(n)$),
the color degree guaranteeing a properly colored $B_k$
should be larger than $\frac{n+k-1}{2}$.

For uncolored friendship subgraphs, we first prove the following
result.
\begin{prop}\label{Prop:F}
Let $k\geq 2$ be a positive integer and
$G$ be a graph on $n\geq 3k-1$ vertices.
If $\delta(G)\geq \frac{n+k-1}{2}$
then $G$ contains a $F_k$.
\end{prop}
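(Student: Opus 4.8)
The plan is to reduce \ref{Prop:F} to a matching problem inside a single neighbourhood. Observe that $G\supseteq F_k$ if and only if some vertex $v$ satisfies $\alpha'(G[N(v)])\ge k$: a matching $\{a_1b_1,\dots,a_kb_k\}$ in $G[N(v)]$ produces triangles $va_ib_i$ that pairwise meet only in $v$, and conversely every copy of $F_k$ arises this way. So it suffices to exhibit such a $v$. The first fact I would record is the minimum degree of the neighbourhood graph $H_v:=G[N(v)]$: for each $u\in N(v)$ one has $\deg_{H_v}(u)=|N(u)\cap N(v)|\ge d(u)+d(v)-n\ge 2\delta(G)-n\ge k-1$, so $\delta(H_v)\ge k-1$ for \emph{every} $v$, and $\delta(H_v)\ge k$ as soon as $d(v)\ge\frac{n+k}{2}$.

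The key auxiliary tool is a matching lemma: if a graph $H$ satisfies $\delta(H)\ge d$ and $|V(H)|\ge 2d$, then $\alpha'(H)\ge d$. I would prove it either from the Berge--Tutte deficiency formula (a set $S$ with $o(H-S)-|S|$ large forces, via the degree bound, each odd component of $H-S$ to be big, which after a short parity check contradicts $|V(H)|\ge 2d$) or, in the spirit of this paper, directly from Lemmas \ref{Lem:alpha} and \ref{Lem:beta(G)}. Granting this, the main case is immediate: if some vertex $v$ has $d(v)\ge\frac{n+k}{2}$, then $\delta(H_v)\ge k$ and, since $n\ge 3k-1$ forces $|V(H_v)|=d(v)\ge 2k$, the matching lemma gives $\alpha'(H_v)\ge k$ and hence $F_k$. (This is exactly where the friendship problem is harder than Lemma \ref{Lem:Bk}: finding a book only needs one high-degree vertex of $H_v$, whereas here the full matching strength is used.)

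It remains to treat graphs in which every vertex has $d(v)<\frac{n+k}{2}$. Combined with $d(v)\ge\frac{n+k-1}{2}$ and integrality, this forces $n+k$ to be odd and $G$ to be $r$-regular with $r=\frac{n+k-1}{2}$; then every $H_v$ has exactly $r$ vertices and $\delta(H_v)\ge k-1$. Assuming for contradiction that $\alpha'(H_v)\le k-1$ for all $v$, I would apply Lemma \ref{Lem:beta(G)} to $H_v$ to obtain a cover $C_v$ of $H_v$ with $|C_v|\le 2\alpha'(H_v)-1\le 2k-3$, so that $I_v:=N(v)\setminus C_v$ is an independent set of $G$ with $|I_v|\ge r-(2k-3)$. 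From here I would mirror the regular-case analysis in the proof of Lemma \ref{Lem:Bk}: for $u\in I_v$ all common neighbours of $u$ and $v$ lie in $C_v$, and using $r$-regularity together with Lemma \ref{Lem:beta(G)-com} one pins down $|N(u)\cap N(v)|$ and the containment $V(G)\setminus N(v)\subseteq N(u)$; iterating with a common neighbour $w$ of $u$ and $v$ then bounds $|N(u)\cap N(w)|$ from below by $\tfrac{n-k+1}{2}$ and from above by the cover size, the target being the contradiction $n\le 3k-2$.

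I expect this last, regular case to be the main obstacle. Away from it the matching lemma does all the work, but at the boundary $d(v)=\frac{n+k-1}{2}$ the neighbourhood graphs carry only minimum degree $k-1$ and matching number $k-1$, so one cannot simply quote the matching lemma and must instead extract a contradiction from the covering structure of $H_v$ and the regularity of $G$. The delicate point is controlling $|N(u)\cap N(v)|$ for $u\in I_v$ --- in particular forcing it down to $k-1$ so that $V(G)\setminus N(v)\subseteq N(u)$ --- since Lemma \ref{Lem:beta(G)} only supplies the weaker cover bound $2k-3$; this is also precisely the regime where the hypothesis $n\ge 3k-1$ is tight, so the estimates here must be handled with care.
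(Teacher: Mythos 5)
Your reduction to neighbourhood matchings, your matching lemma ($\delta(H)\ge d$ and $|V(H)|\ge 2d$ imply $\alpha'(H)\ge d$), and your handling of the case where some vertex has $d(v)\ge\frac{n+k}{2}$ are all correct, and this is already a genuinely different route from the paper, which argues by induction on $k$ (base case taken from Theorem \ref{Thm2:Fk}) and tries to extend an $F_{k-1}$ through its center by a pigeonhole argument. The fatal problem is the regular case, which you flag as ``delicate'' but which in fact cannot be completed, because there the contradiction you are seeking does not exist. If every vertex has degree exactly $r=\frac{n+k-1}{2}$ and $n=3k-1$ (a value permitted by the hypothesis), then $r=2k-1<2k$; since the center of any $F_k$ has degree $2k$, such a graph contains no $F_k$ at all, so no analysis of covers and regularity can ever produce a contradiction from $\alpha'(H_v)\le k-1$. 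Moreover, such graphs exist whenever $k$ is odd: for $k=3$, $n=8$, the complement of $C_8$ is $5$-regular, so $\delta(G)=5=\frac{n+k-1}{2}$, yet its maximum degree is $5<6$, ruling out any $F_3$. Hence Proposition \ref{Prop:F} is false at its boundary $n=3k-1$, and your plan for the regular case necessarily breaks down. A symptom already visible in your sketch: Lemma \ref{Lem:beta(G)} requires $|V(H_v)|\ge 2\alpha'(H_v)+2$, i.e.\ $r\ge 2k$ when $\alpha'(H_v)=k-1$, which fails exactly at $n=3k-1$, so the cover $C_v$ of size at most $2k-3$ that your argument starts from is not even available.

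For what it is worth, the paper's own proof collapses on the same example: its claim that ``there exist $k$ vertices in $N(v)\setminus S$'' needs $\frac{n-3k+3}{2}\ge k$, i.e.\ roughly $n\ge 5k-3$ rather than $n\ge 3k-1$, and the concluding pigeonhole step is not justified. So your instinct that the boundary regular case is the crux was exactly right; the correct conclusion is not that it needs more care, but that the statement must be weakened before any proof can close it. Your method does establish the result whenever some vertex has degree at least $\frac{n+k}{2}$ --- in particular whenever $n+k$ is even --- and what genuinely remains open is only the regular situation with $r\ge 2k$, where Lemma \ref{Lem:beta(G)} at least applies; at $n=3k-1$ nothing can be salvaged.
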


\vspace{4mm}
\noindent{\bf Proof of Proposition \ref{Prop:F}.}
We proceed the proof by induction on $k$.
The basic case $k=2$ is easily derived from
Theorem \ref{Thm2:Fk}.
Let $k\geq 3$ and suppose the result holds for $k-1$.
Suppose to the contrary that there is no
$F_k$ in $G$.
Let $v$ be the center of a $F_{k-1}$ and $S=V(F_k-v)$.
Since $\delta(G)\geq \frac{n+k-1}{2}$,
each edge is contained in at least $k-1$ triangles.
As  $n\geq 3k-1$,
there exist $k$ vertices in $N(v)\setminus S$.
According to pigeonhole principle,
there exists a $F_k$ in $N[v]$.
$\hfill\blacksquare$

As we have already mentioned in the introduction, as corollaries
of the result of Erd\H{o}s et al. \cite{EFGG} on $k$-fans and Erd\H{o}s' conjecture
on books, respectively, the following results hold.
\begin{prop}\label{Prop:F-2}
Let $k\geq 2$ be a positive integer and
$G$ be a graph on $n\geq 50k^2$ vertices.
If $\delta(G)\geq \frac{n+1}{2}$
then $G$ contains a $F_k$.
\end{prop}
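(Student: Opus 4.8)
The plan is to deduce the existence of $F_k$ directly from the Tur\'an number $ex(n,F_k)$ determined by Erd\H{o}s, F\"uredi, Gould, and Gunderson \cite{EFGG}, by showing that the minimum degree hypothesis forces $G$ to have strictly more than $ex(n,F_k)$ edges. First I would bound the edge count from below: since every vertex has degree at least $\frac{n+1}{2}$, summing degrees and halving gives
$e(G)=\tfrac12\sum_{v\in V(G)}d(v)\geq \tfrac12\cdot n\cdot\tfrac{n+1}{2}=\tfrac{n(n+1)}{4}=\tfrac{n^2}{4}+\tfrac{n}{4}.$

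Next I would invoke the upper bound. By \cite{EFGG}, for $n\geq 50k^2$ one has $ex(n,F_k)=\lfloor\frac{n^2}{4}\rfloor+k^2-k$ when $k$ is odd and $ex(n,F_k)=\lfloor\frac{n^2}{4}\rfloor+k^2-\frac{3k}{2}$ when $k$ is even. To treat both parities uniformly I would pass to the weaker (odd-$k$) estimate, so that in either case $ex(n,F_k)\leq \frac{n^2}{4}+k^2-k$.

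Comparing the two bounds, it then suffices to verify $\frac{n}{4}>k^2-k$, i.e. $n>4k^2-4k=4(k^2-k)$. This is guaranteed by the hypothesis, since $n\geq 50k^2>4k^2-4k$ for every $k\geq 1$. Consequently
$e(G)\geq \tfrac{n^2}{4}+\tfrac{n}{4}>\tfrac{n^2}{4}+k^2-k\geq ex(n,F_k),$
so by the definition of the Tur\'an number $G$ must contain a copy of $F_k$, which completes the argument.

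The proof is essentially a single edge-counting comparison once the Tur\'an number is granted, so there is no genuine obstacle beyond citing \cite{EFGG}. The only points needing a moment's care are taking the common upper bound $\frac{n^2}{4}+k^2-k$ for both parities of $k$, so that one inequality $n>4(k^2-k)$ settles the whole statement, and observing that the crude threshold $50k^2$ comfortably dominates the required $4k^2-4k$; hence no sharper bookkeeping on the floor function or on integrality of $e(G)$ is needed, as the strict inequality already holds at the level of the real-valued estimates.
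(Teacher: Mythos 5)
Your proof is correct and matches the paper's approach: the paper states this proposition as an immediate corollary of the Erd\H{o}s--F\"{u}redi--Gould--Gunderson Tur\'an number for $F_k$, which is exactly the edge-counting comparison you carry out ($e(G)\geq \frac{n^2}{4}+\frac{n}{4} > \frac{n^2}{4}+k^2-k \geq ex(n,F_k)$ once $n> 4(k^2-k)$). You have simply made explicit the calculation the paper leaves implicit.
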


\begin{prop}\label{Prop:F-2}
Let $k\geq 2$ be a positive integer and
$G$ be a graph on $n\geq 6k$ vertices.
If $\delta(G)\geq \frac{n+1}{2}$
then $G$ contains a $B_k$.
\end{prop}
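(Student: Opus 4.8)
The plan is to obtain this directly from the (now-established) book conjecture of Erd\H{o}s recalled in the introduction, reducing everything to a one-line edge count. First I would convert the minimum degree hypothesis into a lower bound on the number of edges via handshaking: since every vertex has degree at least $\frac{n+1}{2}$,
\[
e(G)=\frac{1}{2}\sum_{v\in V(G)}d(v)\geq \frac{1}{2}\cdot n\cdot \frac{n+1}{2}=\frac{n(n+1)}{4}.
\]
The crucial observation is that $\frac{n(n+1)}{4}$ is \emph{strictly} larger than $\frac{n^2}{4}$ (for every $n\geq 1$), so $G$ lands precisely in the regime governed by the book theorem. This strictness is exactly what the ``$+1$'' in the bound $\delta(G)\geq \frac{n+1}{2}$ buys us, and it is the only place where that $+1$ is used.

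Next I would invoke the confirmation of Erd\H{o}s' conjecture \cite{Er} due to Edwards \cite{Ed} and, independently, Khad\v{z}iivanov and Nikiforov \cite{KN}: any $n$-vertex graph with more than $\frac{n^2}{4}$ edges contains a book $B_{\lceil n/6\rceil}$. Applying this to our $G$ immediately produces a copy of $B_{\lceil n/6\rceil}$. Finally I would match the size of this book against the target parameter $k$ using the hypothesis $n\geq 6k$: this gives $\lceil n/6\rceil\geq \lceil 6k/6\rceil=k$, and since any book $B_m$ with $m\geq k$ trivially contains $B_k$ (simply discard the surplus pages while keeping the common edge), we conclude that $G$ contains $B_k$, as desired.

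Because the argument is a short edge count followed by a citation, there is essentially no substantive obstacle to overcome; the proof is a genuine corollary rather than an independent theorem. The only subtlety worth flagging is the transition from the weak degree inequality to the \emph{strict} edge inequality $e(G)>\frac{n^2}{4}$, which is what licenses the use of the book theorem and which fails under the weaker hypothesis $\delta(G)\geq \frac{n}{2}$; so any write-up should make that single strict step explicit.
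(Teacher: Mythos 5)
Your proposal is correct and is exactly the argument the paper intends: the paper states this proposition as an immediate corollary of Erd\H{o}s' book conjecture (confirmed by Edwards and by Khad\v{z}iivanov--Nikiforov), and your write-up simply makes explicit the same reduction --- handshaking gives $e(G)\geq \frac{n(n+1)}{4}>\frac{n^2}{4}$, the book theorem yields $B_{\lceil n/6\rceil}$, and $n\geq 6k$ gives $\lceil n/6\rceil\geq k$.
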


So by reasoning the above results, Theorems \ref{Thm1:Bk} and \ref{Thm2:Fk} should
be at least asymptotically tight when $k=o(n)$.

For Theorem \ref{Thm2:Fk}, the corresponding color degree condition may be acceptable
when one considers a nearly spanning $F_k$. One evidence is that, if we
consider the color degree condition for the spanning $F_k$ (that is, $k=\frac{n-1}{2}$),
then the sufficient condition is that $\delta^c\geq n-1$ which
equals to $\frac{n+2k}{2}+O(1)$ (see the following).

\begin{fact}
Let $G$ be an edge-colored graph on $n$ vertices
where $n$ is odd. If $\delta^c\geq n-1$ then $G$
contains a properly-colored $F_{\frac{n-1}{2}}$.
\end{fact}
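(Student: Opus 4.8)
The plan is to observe first that the color-degree hypothesis $\delta^c(G)\ge n-1$ is extremely rigid and essentially dictates the whole structure of $G$. Indeed, for any vertex $v$ one always has $d^c(v)\le d(v)\le n-1$, so the assumption $d^c(v)\ge n-1$ forces $d^c(v)=d(v)=n-1$ for every $v\in V(G)$. The equality $d(v)=n-1$ for all $v$ means $G$ is the complete graph $K_n$, and the equality $d^c(v)=n-1$ means that at each vertex the $n-1$ incident edges receive pairwise distinct colors. That is precisely the statement that the edge-coloring is a \emph{proper} edge-coloring of $K_n$. So the first step reduces the Fact to the following purely structural claim: a properly edge-colored $K_n$ with $n$ odd contains a properly-colored spanning $F_{(n-1)/2}$.

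Next I would simply build the friendship graph by brute force. Fix an arbitrary vertex $v$ to serve as the center. Since $n$ is odd, the set $V(G)\setminus\{v\}$ has even cardinality $n-1$, so I partition it arbitrarily into $(n-1)/2$ unordered pairs $\{x_1,y_1\},\dots,\{x_{(n-1)/2},y_{(n-1)/2}\}$. Because $G=K_n$ is complete, every $x_iy_i$ is an edge, so each triple $\{v,x_i,y_i\}$ spans a triangle; these triangles meet pairwise only in $v$, hence their union is a spanning $F_{(n-1)/2}$ centered at $v$.

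Finally I would check that this $F_{(n-1)/2}$ is properly colored, which is immediate from the properness of the coloring of $K_n$. Any two adjacent edges of the friendship graph share a vertex, and there are only two cases: either they share the center $v$, in which case both are $v$-edges and so carry distinct colors by properness; or they share a leaf $x_i$ (or $y_i$), which by construction belongs to a unique triangle $vx_iy_i$, so the two adjacent edges lie in that single triangle whose three edges $vx_i,vy_i,x_iy_i$ are pairwise adjacent and hence pairwise distinctly colored. No adjacencies arise between outer edges of different triangles, since distinct triangles share no leaf. Thus the $F_{(n-1)/2}$ is properly colored. There is no real obstacle here: the only substantive point is recognizing that $\delta^c\ge n-1$ collapses the host graph to a properly edge-colored $K_n$, after which any pairing of the non-central vertices works and properness is automatic. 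This is why the statement functions as evidence for the tightness discussion (with $k=\tfrac{n-1}{2}$, the threshold $\delta^c\ge n-1$ equals $\tfrac{n+2k-1}{2}=\tfrac{n+2k}{2}+O(1)$) rather than as a difficult theorem.
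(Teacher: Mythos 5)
Your proof is correct. The paper states this Fact without any proof (it appears only as evidence in the concluding tightness discussion), and your argument --- observing that $\delta^c \geq n-1$ forces $d^c(v)=d(v)=n-1$ for every $v$, hence $G$ is a properly edge-colored $K_n$, and then pairing the $n-1$ non-central vertices arbitrarily to form a spanning $F_{\frac{n-1}{2}}$ whose properness is inherited from the proper edge-coloring --- is exactly the natural, evidently intended argument.
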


\section*{Acknowledgment}
The results presented here were reported by the first author in
a workshop to congratulate the birthday of Prof. Jinjiang Yuan. The authors are very grateful to
Prof. Jinjiang Yuan for his encouragement and comments.

\end{document}